\newtheorem{theorem}{Theorem}[section]
\newtheorem{assumption}{Assumption}
\newtheorem{lemma}{Lemma}
\theoremstyle{definition}
\newtheorem{definition}{Definition}
\DeclareMathOperator{\spanop}{span}
\DeclareMathOperator{\Ima}{Im}
\title{Average-case optimization analysis for distributed consensus algorithms on regular graphs}
\author{
  Nhat Trung Nguyen \\
  MIPT\\
  \texttt{ngnhtrg@phystech.edu} \\
   \And
 Alexander Rogozin \\
  MIPT, Innopolis University \\
  \texttt{aleksandr.rogozin@phystech.edu} \\
  \And
 Alexander Gasnikov \\
  Innopolis University, MIPT, IITP \\
  \texttt{gasnikov@yandex.ru} \\
}
\begin{document}

\maketitle

\begin{abstract}
The consensus problem in distributed computing involves a network of agents aiming to compute the average of their initial vectors through local communication, represented by an undirected graph.  This paper focuses on the studying of this problem using an average-case analysis approach, particularly over regular graphs. Traditional algorithms for solving the consensus problem often rely on worst-case performance evaluation scenarios, which may not reflect typical performance in real-world applications. Instead, we apply average-case analysis, focusing on the expected spectral distribution of eigenvalues to obtain a more realistic view of performance. Key contributions include deriving the optimal method for consensus on regular graphs, showing its relation to the Heavy Ball method, analyzing its asymptotic convergence rate, and comparing it to various first-order methods through numerical experiments.
\end{abstract}


\keywords{Consensus problem \and Average-case analysis \and Decentralized optimization \and Regular graph}

\section{Introduction}
The averaging, or consensus, problem in a connected network is a fundamental concept in distributed computing. This problem involves a network of agents connected by undirected communication links. Each agent $i \in V = \{1, \dots, n\}$ in the network holds an initial vector $x_0^{(i)}$, and the communication network is represented as a connected graph $G = (V, E)$. The objective of the agents is to compute the average of these initial vectors across the network while being restricted to communicating only with their immediate neighbors. 

Numerous algorithms have been proposed to address this problem and its variants, each offering different approaches to improve the efficiency of computing the average in a distributed network under certain conditions \cite{boyd2006randomized, berthier2020accelerated}. The main goal is to develop an iterative communication algorithm that allows each agent to efficiently determine the average of the initial vectors across the entire network. This challenge is motivated by its relevance to various fields, including decentralized optimization \cite{scaman2017optimal, rogozin2023decentralized}, distributed control and sensing \cite{boyd2006randomized}, and large-scale machine learning \cite{georgopoulos2014distributed, tsianos2012consensus}.

\subsection{Average-case optimization analysis}
The consensus problem can be formulated as a quadratic programming problem in the context of optimization. This formulation enables the development of accelerated algorithms based on optimization theory specific to quadratic programming. Traditionally, optimization algorithms are evaluated using worst-case scenarios \cite{nemirovski1995information, nesterov2004}, which involves analyzing the algorithm's complexity bound for \textit{any} input within a function class. While this approach gives us a guarantee of convergence speed, it does not always accurately reflect the algorithm's practical performance, as the most challenging cases may rarely occur in real-world applications.

To better understand the typical behavior of an algorithm, average-case analysis is employed. This approach evaluates the average performance of the algorithm across all possible inputs. Average-case analysis is commonly applied in areas like sorting \cite{knuth1997art, 10.1093/comjnl/5.1.10}, simplex method in linear programming \cite{smale1983average, vershik1983estimation, spielman2004smoothed}, and cryptography \cite{katz2014private, bogdanov2006average}. Recently, researchers have extended this approach to the study of optimization algorithms, particularly for optimizing quadratic functions \cite{pedregosa2020acceleration, scieur2020universal, paquette2020halting}. The previous works, including \cite{braca2008running} and \cite{berthier2020accelerated}, have considered the case where the gossip algorithm is initialized randomly. Unlike worst-case analysis, which relies on the extremal eigenvalues of the matrix, average-case analysis considers the \textit{expected spectral distribution} of its eigenvalues. This enhances the understanding of the algorithm's convergence speed and facilitates deriving the optimal algorithm.

\subsection{Contributions}

This study focuses on analyzing optimization algorithms for the consensus problem over \textit{regular graphs} by applying average-case analysis perspectives. The main contributions are:

\begin{itemize}
\item \textbf{Derivation of the Optimal Method.} We derive an optimal algorithm for solving the consensus problem on regular graphs under average-case conditions. To do that, we compute orthogonal polynomials w.r.t. expected spectral measure for random regular graphs. As a result, we get the step-sizes for the consensus algorithm. Our optimal algorithm is the equivalent message-passing algorithm, which was introduced in \cite{berthier2020accelerated}. The connection between message-passing algorithm and polynomial-based algorithm was proposed in \cite{berthier2020accelerated}.
\item \textbf{Analysis of the Optimal Method.} A detailed asymptotic analysis is conducted to evaluate the convergence rate of the optimal algorithm. Note that the difference with the message-passing algorithm analysis in \cite{berthier2020accelerated} is the exact convergence rate we obtain using the average-case analysis. This analysis reveals the convergence characteristics of the method, demonstrating its relationship to the Heavy Ball method and allowing us to compare their performances.
\item \textbf{Experiments}. A series of numerical experiments was carried out to evaluate the effectiveness of the optimal method in comparison with other first-order methods.
\end{itemize}

\section{Consensus Problem} \label{sec:prob_form}

Consider a network of agents represented by an undirected finite graph $G = (V, E)$, where $V = \{1, \dots, n\}$ represents the set of vertices (agents) and $E$ represents the set of edges (communication links). Each agent $i$ holds an initial vector $x_0^{(i)} \in \mathbb{R}^d$. We denote by $x_0 = \left(\left(x^{(1)}_0\right)^\top, \dots, \left(x^{(n)}_0\right)^\top\right)^\top$. The goal is to design efficient algorithms that allow each agent to quickly compute the average value $\overline{x}_0 = \frac{1}{n} \sum_{i = 1}^n x_0^{(i)}$, with the constraint that at each iteration of the algorithm, agents can only exchange their vectors with their neighbors.

To achieve consensus on the graph $G$, we solve the following problem starting with the initial vector $x_0$:

\begin{equation} \label{eq:quad_prob}
    \min_{x \in \mathbb{R}^{nd}} f(x) = \frac{1}{2} x^T \mathbf{L} x,
\end{equation}
where $\mathbf{L} = L \otimes I_d$, $\otimes$ denotes the Kronecker product, and $L$ is a \textit{gossip matrix}, which is defined as follows

\begin{definition}
    A gossip matrix $L \in \mathbb{R}^{n \times n}$ on the graph $G = (V, E)$ is a matrix satisfying following properties:
    \begin{enumerate}
        \item $L$ is an $n \times n$ symmetric matrix,
        \item $L$ is positive semi-definite,
        \item $\ker(L ) = \spanop(\mathbf{1})$, where $\mathbf{1} = (1, \dots, 1)^\top$,
        \item $L$ is defined on the edges of the network: $L_{ij} \neq 0$ only if $i = j$ or $(i, j) \in E$.
    \end{enumerate}
\end{definition}

Laplacian is an example of gossip matrix. The typical gossip matrix for a regular graph is $L = I_n - \frac{1}{k} \cdot A$, where $A$ is the adjacency matrix. The detail properties of gossip matrix $L$ of regular graph will be covered in Section \ref{sec:spectrum_reg_graph}.

In this work, we focus on the optimization perspective for the averaging problem on the reformulation in equation \eqref{eq:quad_prob}. This point of view was applied to achieve consensus acceleration in decentralized optimization \cite{metelev2023consensus, gorbunov2022recent, nguyen2023min}.

We consider \textit{first-order methods} or \textit{gradient-based methods} to solve the problem \eqref{eq:quad_prob}. These are methods in which the sequence of iterates $x_t$ is in the span of previous gradients, i.e.,
\begin{equation} \label{eq:first_order_methods}
    x_{t + 1} \in x_0 + \spanop \{ \nabla f(x_0) , \dots, \nabla f(x_t) \}
\end{equation}
or precisely,
\begin{equation} \label{eq:first_order_methods_quad}
    x_{t + 1} \in x_0 + \spanop \{ \mathbf{L} x_0 , \dots, \mathbf{L} x_t \}
\end{equation}

The following lemma explains why the consensus problem can be solved using an optimization approach.
\begin{lemma}
    Suppose that the sequence $\{x_t\}_{t=1}^{\infty}$ is generated by a first-order algorithm of the kind in \eqref{eq:first_order_methods_quad}, starting with $x_0$. If $\lim\limits_{t\to\infty} x_t = x_*$ and $f(x_*) = 0$, then $x_* = \frac{1}{n} \mathbf{1} \mathbf{1}^T x_0$.
\end{lemma}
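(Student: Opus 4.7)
The plan is to combine two observations: that the assumption $f(x_*)=0$ forces $x_*$ into the kernel of $\mathbf{L}$, while the first-order recursion constrains the displacement $x_*-x_0$ to lie in the image of $\mathbf{L}$. Since $\mathbf{L}$ is symmetric, these two subspaces are orthogonal complements and decompose $\mathbb{R}^{nd}$, which is enough to pin down $x_*$.

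In more detail, I would first argue that $f(x_*)=\tfrac12 x_*^\top \mathbf{L} x_* = 0$ together with $\mathbf{L}\succeq 0$ implies $\mathbf{L}^{1/2}x_*=0$ and hence $\mathbf{L}x_*=0$, so $x_* \in \ker(\mathbf{L})$. Using $\mathbf{L}=L\otimes I_d$ and the gossip-matrix property $\ker(L)=\spanop(\mathbf{1})$, one gets $\ker(\mathbf{L})=\spanop(\mathbf{1})\otimes\mathbb{R}^d$, which already shows the limit has the consensus form $\mathbf{1}\otimes v$ for some $v\in\mathbb{R}^d$; it remains only to identify $v$ with $\bar x_0$.

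Next, from the first-order recursion \eqref{eq:first_order_methods_quad} it follows inductively that $x_t - x_0 \in \spanop\{\mathbf{L}x_0,\dots,\mathbf{L}x_{t-1}\}\subseteq \Ima(\mathbf{L})$ for every $t$. Because $\Ima(\mathbf{L})$ is a finite-dimensional (hence closed) subspace, the assumption $x_t\to x_*$ gives $x_*-x_0\in\Ima(\mathbf{L})$. By symmetry of $\mathbf{L}$, $\Ima(\mathbf{L})=\ker(\mathbf{L})^\perp$, and $\mathbb{R}^{nd}=\ker(\mathbf{L})\oplus\Ima(\mathbf{L})$.

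To conclude, let $P$ denote the orthogonal projector onto $\ker(\mathbf{L})$. Since $\mathbf{1}/\sqrt{n}$ is an orthonormal basis of $\ker(L)$, we have $P=\tfrac{1}{n}(\mathbf{1}\mathbf{1}^\top)\otimes I_d$. Applying $P$ to the identity $x_* = x_0 + (x_*-x_0)$ kills the term $x_*-x_0\in\Ima(\mathbf{L})$ and leaves $Px_*$ on the left, which equals $x_*$ because $x_*\in\ker(\mathbf{L})$. Hence $x_* = P x_0 = \tfrac{1}{n}\mathbf{1}\mathbf{1}^\top x_0$ (understood with the Kronecker factor $I_d$), as claimed. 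The only subtle step is the orthogonality argument in the final paragraph; everything else is essentially bookkeeping with the kernel/image decomposition of a symmetric PSD matrix.
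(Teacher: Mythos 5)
Your proof is correct and follows essentially the same route as the paper: both arguments combine $x_*-x_0\in\Ima(\mathbf{L})$ (from the first-order recursion and closedness of the subspace) with $x_*\in\ker(\mathbf{L})$ (from $f(x_*)=0$ and positive semi-definiteness), and then use the orthogonal kernel/image decomposition of the symmetric matrix $\mathbf{L}$. Your phrasing via the projector $P=\tfrac1n(\mathbf{1}\mathbf{1}^\top)\otimes I_d$ is just a cleaner packaging of the paper's observation that $\sum_i x_*^{(i)}=\sum_i x_0^{(i)}$ while all blocks of $x_*$ are equal.
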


\begin{proof}
     From \eqref{eq:first_order_methods_quad} it follows that $\forall t \, x_t - x_0 \in \Ima \mathbf{L}$. 
     
      Hence $x_* - x_0 \in \Ima \mathbf{L} = \left\{ x = \left(\left(x^{(1)}\right)^T, \dots, \left(x^{(n)}\right)^T\right)^T \in \mathbb{R}^{nd} \bigg| \sum_{i = 1}^n x^{(i)} = 0 \right\}$.
     
     Then we have $\sum_{i = 1}^n x_*^{(i)} = \sum_{i = 1}^n x_0^{(i)}$.

     Moreover $f(x_*) = 0$ implies that $x_* \in \ker \mathbf{L}$ or $x_*^{(1)} = \cdots = x_*^{(n)}$. Therefore, $\forall i \, x_*^{(i)} = \overline{x}_0 = \frac{1}{n} \sum_{i = 1}^n x_0^{(i)}$.

     This means that the solution of the problem \eqref{eq:quad_prob} using gradient-based methods with the initial vector $x_0$ is the vector $x_* = \frac{1}{n} \mathbf{1} \mathbf{1}^T x_0$.
\end{proof}

\section{Polynomial-Based Iterative Methods} \label{sec:poly_methods}

\subsection{From First-Order Methods to Polynomials}

Our research is based on the classical framework of polynomial-based iterative methods, developed by \cite{fischer2011polynomial} in his work on solving linear systems. This framework reveals an intimate link between first-order methods and polynomials, simplifying the analysis of quadratic objectives. The following lemma illustrates how polynomials can be effectively utilized in the analysis of optimization methods.

\begin{lemma}
    (\cite{hestenes1952methods}) Let $x_t$ be generated by a first-order method. Then there exists a polynomial $P_t$ of degree $t$ such that $P_t(0) = 1$ and it verifies 
    \begin{equation}
        x_t - x_* = P_t(\mathbf{L})(x_0 - x_*)
    \end{equation}
    Following \cite{fischer2011polynomial}, we will refer to this polynomial $P_t$ as the residual polynomial.
\end{lemma}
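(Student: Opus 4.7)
The plan is to proceed by induction on $t$, exploiting the fact that any minimizer $x_*$ of the quadratic $f(x) = \frac12 x^\top \mathbf{L} x$ satisfies $\nabla f(x_*) = \mathbf{L} x_* = 0$. Under this identity every gradient $\mathbf{L} x_k$ along the trajectory can be rewritten as $\mathbf{L}(x_k - x_*)$, which is exactly what is needed to convert the spanning condition \eqref{eq:first_order_methods_quad} into a statement about a polynomial in $\mathbf{L}$ applied to the initial error $x_0 - x_*$.

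The base case $t=0$ is handled by choosing $P_0(z) = 1$, so that $P_0(0)=1$ and $x_0 - x_* = P_0(\mathbf{L})(x_0-x_*)$ trivially. For the inductive step, assume that for every $k \leq t$ there is a polynomial $P_k$ of degree at most $k$ with $P_k(0)=1$ and $x_k - x_* = P_k(\mathbf{L})(x_0 - x_*)$. By \eqref{eq:first_order_methods_quad} we can write
\[
x_{t+1} = x_0 + \sum_{k=0}^{t} \alpha_k \, \mathbf{L} x_k
\]
for suitable scalars $\alpha_k$. Subtracting $x_*$, using $\mathbf{L} x_* = 0$ to replace each $\mathbf{L} x_k$ by $\mathbf{L}(x_k - x_*)$, and then applying the inductive hypothesis, this becomes
\[
x_{t+1} - x_* = \Bigl( I + \sum_{k=0}^{t} \alpha_k \, \mathbf{L}\, P_k(\mathbf{L}) \Bigr)(x_0 - x_*).
\]
So the choice $P_{t+1}(z) := 1 + \sum_{k=0}^{t} \alpha_k z P_k(z)$ does the job: it has degree at most $t+1$, and $P_{t+1}(0)=1$ because every term apart from the leading constant carries a factor of $z$.

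The only delicate point, and the one worth emphasizing, is the normalization $P_t(0)=1$: this is precisely why we center the iterates at $x_*$ rather than at $0$. The constant term $x_0-x_*$ in the expansion of $x_{t+1} - x_*$ is what produces the $+1$ in $P_{t+1}$, and the rewriting $\mathbf{L} x_k = \mathbf{L}(x_k - x_*)$ is what makes the remaining terms assemble into $zP_k(z)$ applied to $x_0 - x_*$. Without the identity $\mathbf{L} x_* = 0$ one cannot pass from the iterate expansion to a polynomial in $\mathbf{L}$ acting on the error; everything else is a routine bookkeeping argument in linear algebra.
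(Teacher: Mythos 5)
Your induction is correct and is exactly the standard argument; the paper itself does not prove this lemma but defers to \cite{pedregosa2021residualpolyopt}, where the same inductive construction appears. The one cosmetic point: your argument yields a polynomial of degree \emph{at most} $t+1$ (the coefficients $\alpha_k$ may vanish), whereas the statement says ``degree $t$''; this is how the lemma is universally meant and used, so it is not a gap, but it is worth stating ``degree at most $t$'' explicitly. Your emphasis on why $\mathbf{L}x_*=0$ is the enabling identity --- it both converts $\mathbf{L}x_k$ into $\mathbf{L}(x_k-x_*)$ and leaves the constant term $x_0-x_*$ intact so that $P_{t+1}(0)=1$ --- is precisely the right thing to highlight.
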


The proof of the lemma can be found in \cite{pedregosa2021residualpolyopt}. Applying this lemma, we can assign a polynomial to each optimization method and thus determine the method's convergence. Consequently, we can derive accelerated algorithms by selecting a sequence of polynomials that minimizes the distance to the solution. In the following sections, we will discuss several popular optimization methods and their corresponding polynomials.

\subsection{Gradient Descent}

Let $\lambda_{\max} = \lambda_{\max}(L)$ and $\lambda_{\min} = \lambda_{\min}^+(L)$ denote the largest and smallest positive eigenvalues of $L$, respectively. Consider the gradient descent method with optimal step-size as described by \cite{polyak1987introduction}. The iterates are generated as follows:

\begin{equation}  \label{eq:grad_descent}
    x_{t + 1} = x_t - \frac{2}{\lambda_{\max} + \lambda_{\min}} \nabla f(x_t)
\end{equation}

The residual polynomial associated with the gradient descent method has a remarkably simple form. By the properties of $L$, we have $\mathbf{L} x_* = \frac{1}{n} \mathbf{L} \mathbf{1} \mathbf{1}^T x_0 = 0$. We can use this to express the gradient as $\nabla f(x_t) = \mathbf{L} x_t = \mathbf{L} (x_t - x_*)$. Subtracting $x_*$ from both sides of equation \eqref{eq:grad_descent}, we get:

\begin{equation*}
    x_{t + 1} - x_* = \left(I_{nd} - \frac{2}{\lambda_{\max} + \lambda_{\min}} \cdot \mathbf{L}\right) (x_t - x_*) = \cdots = \left(I_{nd} - \frac{2}{\lambda_{\max} + \lambda_{\min}} \cdot \mathbf{L}\right)^{t + 1} (x_0 - x_*)
\end{equation*}

Thus, the residual polynomial of gradient descent method is:

\begin{equation} \label{eq:GD_residual_poly}
    P_t^{\text{GD}}(\lambda) = \left(1 - \frac{2}{\lambda_{\max} + \lambda_{\min}} \lambda\right)^t.
\end{equation}

\subsection{Gradient Descent with Momentum}
Gradient descent with momentum is an optimization method that improves the basic gradient descent algorithm by adding a momentum term to the update rule. This method updates parameters using two components: \textit{momentum term}, which is the difference between the current and previous iterates previous iterate $(x_t - x_{t - 1})$, and the gradient of the objective function $\nabla f(x_t)$.

\begin{algorithm}[H]
\caption{Gradient Descent with Momentum}\label{alg:MomentumGD}
\begin{algorithmic}
\State \textbf{Input:} starting guess $x_0$, step-size $h > 0$ and momentum parameter $m \in (0, 1)$.
\State \textbf{Initialize:} $x_1 = x_0 - \frac{h}{1 + m} \nabla f(x_0)$
\For {$t = 1, 2, \dots$}
\State $x_{t+1} = x_t + m (x_t - x_{t-1}) - h \nabla f(x_t)$
\EndFor
\end{algorithmic}    
\end{algorithm}

\subsubsection*{Polyak Momentum}

Polyak Momentum, also known as the Heavy Ball method, is a widely used optimization technique. Initially developed for solving linear equations and referred to as Frankel's method (\cite{frankel1950convergence, hochstrasser1954anwendung, rutishauser1959theory}), it was later extended to general optimization problems and popularized by Boris Polyak in \cite{polyak1964some, polyak1987introduction}.

The Polyak Momentum method follows the update rules outlined in Algorithm \ref{alg:MomentumGD}, with specific momentum and step-size parameters:

\begin{equation} \label{eq:PM_parameters}
    m = \left(\frac{\sqrt{\lambda_{\max}} - \sqrt{\lambda_{\min}}}{\sqrt{\lambda_{\max}} + \sqrt{\lambda_{\min}}}\right)^2 \quad \text{and} \quad h = \left(\frac{2}{\sqrt{\lambda_{\max}} + \sqrt{\lambda_{\min}}}\right)^2.
\end{equation}

The full algorithm is as follows:

\begin{algorithm}[H]
\caption{Polyak Momentum}\label{alg:PM}
\begin{algorithmic}
\State \textbf{Input:} starting guess $x_0$.
\State \textbf{Initialize:} $x_1 = x_0 - \frac{2}{\lambda_{\max} + \lambda_{\min}} \nabla f(x_0)$
\For {$t = 1, 2, \dots$}
    \State $x_{t+1} = x_t + \left(\frac{\sqrt{\lambda_{\max}} - \sqrt{\lambda_{\min}}}{\sqrt{\lambda_{\max}} + \sqrt{\lambda_{\min}}}\right)^2 (x_t - x_{t-1}) - \left(\frac{2}{\sqrt{\lambda_{\max}} + \sqrt{\lambda_{\min}} }\right)^2 \nabla f(x_t)$
\EndFor
\end{algorithmic}
\end{algorithm}

To derive the residual polynomials associated with the gradient descent method with momentum, we use the Chebyshev polynomials of the first and second kinds. These polynomials are fundamental in various mathematical and applied science domains, including approximation theory, numerical analysis, and spectral methods. In our context, Chebyshev polynomials are employed to express the residual polynomials associated with different optimization methods. Specifically, our optimal method is derived from the Chebyshev polynomials of the second kind, using their orthogonal properties. The Chebyshev polynomials are defined as follows:

\subsubsection*{Chebyshev Polynomials of the First Kind}
The Chebyshev polynomials of the first kind, \( T_n(x) \), are defined by the recurrence relation:
\begin{equation} \label{eq:chebyshev_poly_first_kind}
\begin{aligned}
    T_0(\lambda) &= 1, \\
    T_1(\lambda) &= \lambda, \\
    T_{t+1}(\lambda) &= 2 \lambda T_t(\lambda) - T_{t-1}(\lambda) \quad \text{for} \ t \geq 1.
\end{aligned}
\end{equation}

\subsubsection*{Chebyshev Polynomials of the Second Kind}
Similarly, the Chebyshev polynomials of the second kind, \( U_n(x) \), are defined defined recursively as follows:
\begin{equation} \label{eq:chebyshev_poly_second_kind}
\begin{aligned}
    U_0(\lambda) &= 1, \\
    U_1(\lambda) &= 2 \lambda, \\
    U_{t+1}(\lambda) &= 2 \lambda U_t(\lambda) - U_{t-1}(\lambda) \quad \text{for} \ t \geq 1.
\end{aligned}
\end{equation}

\subsubsection*{Residual polynomial}

The relationship between Chebyshev polynomials and the residual polynomial of gradient descent with momentum is well-established. Specifically, the residual polynomial of gradient descent with momentum can be expressed as a combination of Chebyshev polynomials of the first and second kinds. This relationship is explored in detail in \cite{berthier2020accelerated, pedregosa2021residualmomentum, paquette2023halting}. The following theorem provides the explicit expression for the residual polynomials corresponding to the momentum method.

\begin{theorem}[\cite{berthier2020accelerated, pedregosa2021residualmomentum, paquette2023halting}]
Consider the momentum method \ref{alg:MomentumGD} with step-size $h$ and momentum parameter $m$. The residual polynomial $P_t$ associated with this method can be written in terms of Chebyshev polynomials as
\begin{equation} \label{eq:momentum_poly_via_chebysher}
    P_t^{MGD}(\lambda) = m^{t/2} \left( \frac{2m}{1 + m} T_t(\sigma(\lambda)) + \frac{1 - m}{1 + m} U_t(\sigma(\lambda)) \right),
\end{equation}
with $\sigma(\lambda) = \frac{1}{2 \sqrt{m}} (1 + m - h \lambda)$.
\end{theorem}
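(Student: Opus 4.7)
The plan is to reduce the theorem to verifying that the claimed formula satisfies the same three-term recurrence and the same initial conditions as the true residual polynomial $P_t^{MGD}$, which is the standard way to identify polynomial families.

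First I would derive the recurrence satisfied by $P_t^{MGD}$ directly from Algorithm \ref{alg:MomentumGD}. Using $\nabla f(x_t) = \mathbf{L}(x_t - x_*)$ and subtracting $x_*$ from both sides of the update rule, one obtains
\begin{equation*}
x_{t+1} - x_* = \bigl((1+m)I_{nd} - h\mathbf{L}\bigr)(x_t - x_*) - m(x_{t-1} - x_*).
\end{equation*}
Writing $x_t - x_* = P_t^{MGD}(\mathbf{L})(x_0 - x_*)$ and reading off the polynomial identity on each eigenspace, this translates to
\begin{equation*}
P_{t+1}^{MGD}(\lambda) = (1 + m - h\lambda)\,P_t^{MGD}(\lambda) - m\,P_{t-1}^{MGD}(\lambda),
\end{equation*}
with initial values $P_0^{MGD}(\lambda) = 1$ and, from the initialization step, $P_1^{MGD}(\lambda) = 1 - \tfrac{h}{1+m}\lambda$.

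Next I would denote the right-hand side of \eqref{eq:momentum_poly_via_chebysher} by $Q_t(\lambda)$ and show that the sequence $\{Q_t\}$ obeys the same recursion. Since the Chebyshev recurrences \eqref{eq:chebyshev_poly_first_kind} and \eqref{eq:chebyshev_poly_second_kind} share the common form $X_{t+1}(\sigma) = 2\sigma X_t(\sigma) - X_{t-1}(\sigma)$, applying them termwise inside $Q_{t+1}$ yields
\begin{equation*}
Q_{t+1}(\lambda) = 2\sigma(\lambda)\sqrt{m}\,Q_t(\lambda) - m\,Q_{t-1}(\lambda).
\end{equation*}
Substituting $2\sigma(\lambda)\sqrt{m} = 1 + m - h\lambda$ recovers exactly the recurrence of $P_t^{MGD}$. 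It then only remains to verify the base cases: $Q_0(\lambda) = \tfrac{2m}{1+m} + \tfrac{1-m}{1+m} = 1$, and using $T_1(\sigma) = \sigma$, $U_1(\sigma) = 2\sigma$ together with the definition of $\sigma(\lambda)$ one computes $Q_1(\lambda) = 1 - \tfrac{h}{1+m}\lambda$, matching $P_1^{MGD}$.

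The routine calculations are straightforward; the only mild subtlety, and the place I would be most careful, is bookkeeping the factor $m^{t/2}$ together with the $\tfrac{1}{2\sqrt{m}}$ scaling inside $\sigma$, so that the rescaled Chebyshev recurrence cleanly produces the $(1+m-h\lambda)$ coefficient without stray $\sqrt{m}$ factors. Once the recurrence and both initial values agree, uniqueness of solutions to a linear two-step recurrence forces $P_t^{MGD} \equiv Q_t$ for all $t \geq 0$, which is the claim.
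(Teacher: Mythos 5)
Your proof is correct: the recurrence $P_{t+1}^{MGD}(\lambda) = (1+m-h\lambda)P_t^{MGD}(\lambda) - mP_{t-1}^{MGD}(\lambda)$ with $P_0^{MGD}=1$, $P_1^{MGD}(\lambda)=1-\tfrac{h}{1+m}\lambda$ follows directly from the update rule, the candidate $Q_t$ satisfies the same recurrence since $2\sigma(\lambda)\sqrt{m}=1+m-h\lambda$, and the base cases match, so uniqueness of the two-step recurrence gives the identity. The paper states this theorem without proof (citing the literature), and your argument is exactly the standard verification used in those references, so there is nothing to add.
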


Using the known properties of Chebyshev polynomials, this approach helps derive convergence bounds for the momentum method in both worst-case and average-case analyses. A detailed analysis is provided in the appendix.

\section{Framework for average-case analysis} \label{sec:average_framework}

To analyze the average-case consensus problem, we consider a set of graphs and try to estimate the average convergence time of all graphs. Note that for different graphs we obtain different matrices for the optimization problem. Thus, our work is to analyze the convergence rate of the problem \eqref{eq:quad_prob} in the average case with random matrix $L$. We will consider only \textit{oblivious} gradient-based methods of kind \eqref{eq:first_order_methods}, meaning those in which the update coefficients are predetermined and do not rely on prior updates. This excludes some methods like conjugate gradients.

\subsection{Average-case analysis and expected error}
We present a framework for the average-case analysis of random quadratic problems, highlighting how the convergence rate is affected by the matrix spectrum. A practical method for collecting statistical data on the matrix spectrum is by examining its empirical spectral distribution.

\begin{definition}[Empirical/Expected spectral distribution]
Let $L$ be a random matrix with eigenvalues $\{\lambda_1, \dots, \lambda_n\}$. The empirical spectral distribution of $L$, called $\mu_L$, is the probability measure
\begin{equation}
    \mu_L(\lambda) = \frac{1}{n} \sum_{i=1}^n \delta_{\lambda_i} (\lambda),
\end{equation}
where $\delta_{\lambda_i}$ is the Dirac delta, a distribution equal to zero everywhere except at $\lambda_i$ and whose integral over the entire real line is equal to one.

Since $L$ is random, the empirical spectral distribution $\mu_L$ is a random measure. Its expectation over $L$,
\begin{equation}
    \mu = \mathbb{E}_L \left[ \mu_L \right]
\end{equation}
is called the \textit{expected spectral distribution}
\end{definition}

\begin{assumption}
    We assume the $x_0 - x_*$ is independent of $L$ and
    \begin{equation}
        \mathbb{E}(x_0 - x_*)(x_0 - x_*)^{T} = R^2 I.
    \end{equation}
\end{assumption}

\begin{theorem}[\cite{pedregosa2020acceleration}] \label{th:expected_error} Let $x_t$ be generated by a first-order method, associated to the polynomial $P_t$. Then we can decompose the expected error at iteration $t$ as
\begin{equation} \label{eq:expected_error}
    \mathbb{E} \| x_t - x_* \|^ 2 = R^2 \int P_t^2 d\mu.
\end{equation}
\end{theorem}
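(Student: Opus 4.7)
The plan is to chain together three ingredients: the residual-polynomial representation of the iterate, the independence assumption on $x_0 - x_*$, and the spectral representation of the trace of a symmetric matrix. None of these steps is deep; the whole proof is a short calculation.

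First I would apply the residual-polynomial lemma of Section 3 to write $x_t - x_* = P_t(\mathbf{L})(x_0 - x_*)$. Squaring the Euclidean norm and using the cyclic property of the trace gives
\begin{equation*}
\|x_t - x_*\|^2 = (x_0 - x_*)^\top P_t(\mathbf{L})^2 (x_0 - x_*) = \mathrm{tr}\!\left(P_t(\mathbf{L})^2 \, (x_0 - x_*)(x_0 - x_*)^\top\right).
\end{equation*}
Taking expectations and using the independence of $x_0 - x_*$ and $L$ (hence of $\mathbf{L} = L \otimes I_d$), together with Assumption 1, I obtain
\begin{equation*}
\mathbb{E}\|x_t - x_*\|^2 = \mathbb{E}_L \,\mathrm{tr}\!\left(P_t(\mathbf{L})^2 \, \mathbb{E}\bigl[(x_0 - x_*)(x_0 - x_*)^\top\bigr]\right) = R^2 \, \mathbb{E}_L \,\mathrm{tr}\!\left(P_t(\mathbf{L})^2\right).
\end{equation*}

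Next I would convert the trace into an integral against the empirical spectral distribution. Because $\mathbf{L}$ is symmetric, the matrix $P_t(\mathbf{L})^2$ shares its eigenvectors and has eigenvalues $P_t(\lambda)^2$ for $\lambda$ an eigenvalue of $\mathbf{L}$. The Kronecker structure $\mathbf{L} = L \otimes I_d$ only repeats each eigenvalue of $L$ exactly $d$ times, so
\begin{equation*}
\mathrm{tr}\!\left(P_t(\mathbf{L})^2\right) = d \sum_{i=1}^n P_t(\lambda_i(L))^2 = d n \int P_t^2(\lambda)\, d\mu_L(\lambda),
\end{equation*}
by the definition of the empirical spectral measure $\mu_L$. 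Finally I would exchange expectation and integration (Tonelli applies because the integrand is nonnegative) and invoke the definition $\mu = \mathbb{E}_L \mu_L$ to land on the claimed formula, with the $dn$ factor absorbed into the constant $R^2$ following the normalization convention of \cite{pedregosa2020acceleration}.

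The main obstacle is not conceptual but bookkeeping: keeping track of whether the spectral measure is normalized to a probability measure or to the counting measure on eigenvalues, and handling the multiplicity factor $d$ introduced by the Kronecker product. Once the convention is fixed, each step above is a one-line identity, so I expect the write-up to be short.
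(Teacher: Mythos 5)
Your proof is correct and is the standard argument; the paper itself does not prove this theorem but imports it from \cite{pedregosa2020acceleration}, and your chain (residual polynomial, trace identity, independence, spectral decomposition of the trace, Tonelli) is exactly how it is established there. Your bookkeeping remark is also on target: with Assumption 1 taken literally ($\mathbb{E}(x_0-x_*)(x_0-x_*)^\top = R^2 I$ on $\mathbb{R}^{nd}$) and $\mu_L$ normalized to a probability measure, the computation yields an extra factor $nd$, so for \eqref{eq:expected_error} to hold exactly the assumption must be read with the normalization $\tfrac{R^2}{nd} I$ (equivalently, $R^2 = \mathbb{E}\|x_0-x_*\|^2$), which is the convention of the cited source.
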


\subsection{Optimal method}

The framework established in the previous section allows us to investigate the question of optimality concerning average-case complexity. To develop optimal methods, we will first introduce some concepts from the theory of orthogonal polynomials.

\begin{definition}
    Let $\alpha$ be a non-decreasing function such that $\int_{\mathbb{R}} Q d \alpha$ is finite for all polynomials $Q$. We will say that the sequence of polynomials $P_0, P_1, \dots$ is orthogonal with respect to $d\alpha$ if $P_t$ has degree $t$ and
    \begin{equation}
        \int_{\mathbb{R}} P_i P_j d \alpha 
        \begin{cases}
            = 0 ,& \text{if } i \neq j \\
            > 0, & \text{if } i = j.
        \end{cases}
    \end{equation}
    Furthermore, if they verify $P_t(0) = 1$ for all $t$, we call these \textit{residual orthogonal polynomials}.
\end{definition}

\begin{lemma}[Three-term recurrence, \cite{pedregosa2020acceleration}, \cite{fischer2011polynomial}]
Any sequence of residual orthogonal polynomials $P_1$, $P_2$, $\dots$ verifies the following three-term recurrence
\begin{equation} \label{lemma:three_term}
    P_t(\lambda) = (a_t + b_t \lambda) P_{t-1}(\lambda) + (1 - a_t) P_{t-2}(\lambda)
\end{equation}
for some scalars $a_t$, $b_t$ with $a_0 = a_1 = 1$ and $b_0 = 0$.
\end{lemma}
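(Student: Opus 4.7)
The plan is to derive the stated recurrence in two stages: first obtain a generic three-term recurrence that any orthogonal polynomial family satisfies, then specialize to the residual normalization $P_t(0) = 1$ to pin down the constant term.

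For the first stage, I would use the classical argument: since $P_0, P_1, \dots, P_t$ form a basis of the space of polynomials of degree $\le t$, and $\lambda P_{t-1}(\lambda)$ is a polynomial of degree exactly $t$, I can expand
\begin{equation*}
    \lambda P_{t-1}(\lambda) = \sum_{k = 0}^{t} c_k P_k(\lambda), \qquad c_k = \frac{\int \lambda \, P_{t-1} P_k \, d\alpha}{\int P_k^2 \, d\alpha}.
\end{equation*}
For $k \le t-3$, the polynomial $\lambda P_k(\lambda)$ has degree at most $t-2$, hence is a linear combination of $P_0, \dots, P_{t-2}$, each of which is orthogonal to $P_{t-1}$; thus $c_k = 0$. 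This leaves only $c_{t-2}, c_{t-1}, c_t$. Comparing leading coefficients shows $c_t \ne 0$, so I can solve for $P_t$ as $P_t(\lambda) = (\alpha_t + \beta_t \lambda) P_{t-1}(\lambda) + \gamma_t P_{t-2}(\lambda)$ for suitable scalars.

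For the second stage, I evaluate the recurrence at $\lambda = 0$ and use $P_t(0) = P_{t-1}(0) = P_{t-2}(0) = 1$ to get $1 = \alpha_t + \gamma_t$, i.e. $\gamma_t = 1 - \alpha_t$. Setting $a_t := \alpha_t$ and $b_t := \beta_t$ yields exactly \eqref{lemma:three_term}. The initial conditions are verified by setting $P_{-1} \equiv 0$ by convention: for $t = 1$, the recurrence reads $P_1(\lambda) = (a_1 + b_1 \lambda) P_0(\lambda)$, and the requirement $P_1(0) = 1$ combined with $P_0 \equiv 1$ forces $a_1 = 1$; the conditions $a_0 = 1$, $b_0 = 0$ are simply consistent with $P_0 \equiv 1$.

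The only delicate point is the nondegeneracy argument showing $c_t \neq 0$ (so that one can solve for $P_t$), and the fact that $P_{t-1}(0)$ and $P_{t-2}(0)$ are both $1$ is what makes the coefficient of $P_{t-2}$ collapse to $1 - a_t$ rather than an independent third parameter. Everything else is a direct consequence of orthogonality and the vector-space structure of polynomials up to degree $t$, so no further obstacle is expected.
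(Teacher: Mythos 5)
Your argument is correct and is the standard classical derivation of the three-term recurrence (expand $\lambda P_{t-1}$ in the orthogonal basis, kill all coefficients below index $t-2$ by orthogonality, check the leading coefficient to solve for $P_t$, then evaluate at $\lambda = 0$ to collapse the third coefficient to $1 - a_t$ via the residual normalization). The paper does not supply its own proof of this lemma --- it is quoted from \cite{pedregosa2020acceleration} and \cite{fischer2011polynomial} --- but your route is exactly the one used in those references, and the two delicate points you flag (nondegeneracy $c_t \neq 0$ and the role of $P_t(0)=1$) are the right ones. The only cosmetic remark is that $a_0 = 1$, $b_0 = 0$ are not really ``verified'' by the recurrence (which only makes sense for $t \geq 1$, or $t \geq 2$ without the $P_{-1}\equiv 0$ convention); they are initialization conventions for the associated algorithm, which is essentially what you say.
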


The following theorem is the key result in the average-case analysis of quadratic problems. It explains how to find the optimal method for a given problem spectrum.
\begin{theorem}[Theorem 2.1 from \cite{pedregosa2020acceleration}] \label{th:optimal_method}
    Let $P_t$ be the residual orthogonal polynomials of degree $t$ with respect to the weight function $\lambda d \mu (\lambda)$, and let $a_t$, $b_t$ be the constants associated with
    its three-term recurrence. Then the algorithm
    \begin{equation}
        x_t = x_{t-1} + (1 - a_t) (x_{t-2} - x_{t-1}) + b_t \nabla f(x_{t-1}), 
    \end{equation}
    has the smallest expected error $\mathbb{E} \|x_t - x_*\|^2$ over the class of oblivious first-order methods. Moreover, its expected error is
    \begin{equation}
        \mathbb{E} \|x_t - x_*\|^2 = R^2 \int_{\mathbb{R}} P_t d \mu.
    \end{equation}
\end{theorem}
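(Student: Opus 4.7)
The plan is to reduce the optimization over oblivious first-order methods to a constrained optimization over residual polynomials, identify the minimizer through an orthogonality condition, and finally translate the resulting polynomial recurrence back into an update rule for the iterates.

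\textbf{Step 1: reduction to a polynomial minimization.} By Theorem~\ref{th:expected_error}, every oblivious first-order method of the form \eqref{eq:first_order_methods_quad} corresponds to a residual polynomial $P$ of degree at most $t$ with $P(0)=1$, and the expected squared error at step $t$ equals $R^2\int P^2\,d\mu$. Conversely, any residual polynomial is realized by some oblivious method. Minimizing the expected error is therefore equivalent to solving
\begin{equation*}
    \min_{\substack{\deg P \le t \\ P(0)=1}} \int_{\mathbb{R}} P(\lambda)^2\,d\mu(\lambda).
\end{equation*}

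\textbf{Step 2: optimality via orthogonality.} I would observe that the admissible set is the affine subspace $P_t + \{\lambda Q : \deg Q \le t-1\}$, since a degree-$t$ polynomial vanishing at $0$ factors as $\lambda Q$ with $\deg Q \le t-1$. Because $\int P^2\,d\mu$ is strictly convex in the coefficients of $P$, the unique minimizer is characterized by the first-order condition
\begin{equation*}
    \int_{\mathbb{R}} P_t(\lambda)\, Q(\lambda)\, \lambda\, d\mu(\lambda) = 0 \quad \text{for every } Q \text{ of degree } \le t-1.
\end{equation*}
This is precisely the statement that $P_t$ is orthogonal to all polynomials of degree strictly less than $t$ with respect to the weighted measure $\lambda\,d\mu(\lambda)$, which together with $P_t(0)=1$ identifies $P_t$ as the residual orthogonal polynomial hypothesized in the theorem.

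\textbf{Step 3: from the polynomial recurrence to the algorithm.} Using the three-term recurrence from Lemma~\eqref{lemma:three_term}, evaluating at $\mathbf{L}$ and applying to $x_0-x_*$ yields
\begin{equation*}
    x_t - x_* = (a_t I + b_t \mathbf{L})(x_{t-1}-x_*) + (1-a_t)(x_{t-2}-x_*).
\end{equation*}
Since $x_*$ lies in $\ker\mathbf{L}$, we have $\mathbf{L}(x_{t-1}-x_*)=\mathbf{L}x_{t-1}=\nabla f(x_{t-1})$. The constant-in-$x_*$ terms cancel after rearrangement, producing the update $x_t = x_{t-1} + (1-a_t)(x_{t-2}-x_{t-1}) + b_t\nabla f(x_{t-1})$ stated in the theorem. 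Substituting the optimal $P_t$ into Theorem~\ref{th:expected_error} gives the claimed error formula $R^2\int P_t^2\,d\mu$.

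\textbf{Main obstacle.} The delicate step is Step 2: one must justify that perturbations of the form $\lambda Q$ exhaust the tangent directions at an admissible polynomial, and that the stationarity condition one obtains coincides exactly with orthogonality against $\lambda\,d\mu$. The key algebraic fact to verify cleanly is that every degree-$t$ polynomial vanishing at $0$ admits a factorization $\lambda Q$ with $\deg Q\le t-1$, which allows one to parameterize the feasible set by an unconstrained polynomial of degree $t-1$. The remaining pieces—translating the recurrence to iterates and using $\mathbf{L}x_*=0$—are bookkeeping, but one should verify that the correspondence between oblivious methods and residual polynomials in Step 1 is a bijection up to equivalence, so that restricting to polynomials does not exclude any candidate method.
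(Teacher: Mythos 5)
The paper does not actually prove this statement: it is imported verbatim as Theorem~2.1 of \cite{pedregosa2020acceleration}, so there is no in-paper argument to compare against. Your reconstruction follows the standard proof from that reference: reduce to minimizing $\int P^2\,d\mu$ over residual polynomials via Theorem~\ref{th:expected_error}, characterize the minimizer by stationarity along the feasible directions $\lambda Q$ (which is exactly orthogonality with respect to $\lambda\,d\mu$), and unroll the three-term recurrence into the momentum-type update using $\mathbf{L}x_*=0$. Steps~1--3 are sound, and the point you flag in Step~2 --- that a degree-$t$ polynomial vanishing at $0$ factors as $\lambda Q$ with $\deg Q\le t-1$, so the feasible set is an affine subspace parameterized by $Q$ --- is indeed the crux and is correct.

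There is, however, one concrete gap at the very end. The theorem asserts $\mathbb{E}\|x_t-x_*\|^2 = R^2\int_{\mathbb{R}} P_t\,d\mu$ (first power of $P_t$), whereas your final sentence delivers $R^2\int P_t^2\,d\mu$ and calls it ``the claimed error formula.'' These are equal, but only because of an identity you never state: writing $P_t^2 = P_t + P_t(P_t-1)$ and noting $P_t(0)=1$ forces $P_t-1=\lambda Q$ with $\deg Q\le t-1$, so $\int P_t(P_t-1)\,d\mu=\int P_t\,Q\,\lambda\,d\mu=0$ by exactly the orthogonality established in Step~2, whence $\int P_t^2\,d\mu=\int P_t\,d\mu$. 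Without this observation the stated formula does not follow from Theorem~\ref{th:expected_error} alone. A second, smaller caveat: your minimization is performed separately at each fixed $t$, and one should note (as your Step~3 implicitly does, but it deserves a sentence) that the three-term recurrence guarantees the whole family $\{P_s\}_{s\le t}$ of per-iteration optima is realized simultaneously by a single oblivious method, so the algorithm is optimal at every iteration rather than only at the terminal one.
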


\section{Optimal method for regular graph} \label{sec:regular_optim}

\subsection{Spectrum of regular graph} \label{sec:spectrum_reg_graph}

\begin{definition}
A regular graph is a graph where all vertices have the same degree, that is, each vertex has the same number of neighbors. A regular graph with vertices of degree $k$ is called a $k$-regular graph.
\end{definition}

\begin{figure}
\begin{subfigure}{.33\textwidth}
  \centering
  \includegraphics[width=\linewidth]{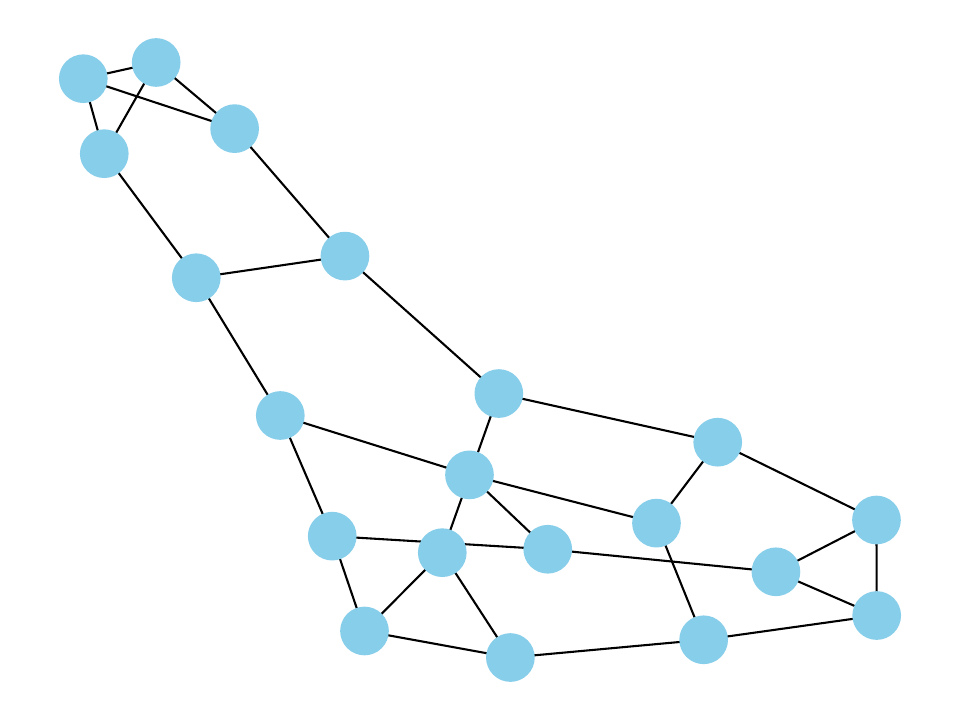}
  \caption{}
  \label{fig:sfig1}
\end{subfigure}%
\begin{subfigure}{.33\textwidth}
  \centering
  \includegraphics[width=\linewidth]{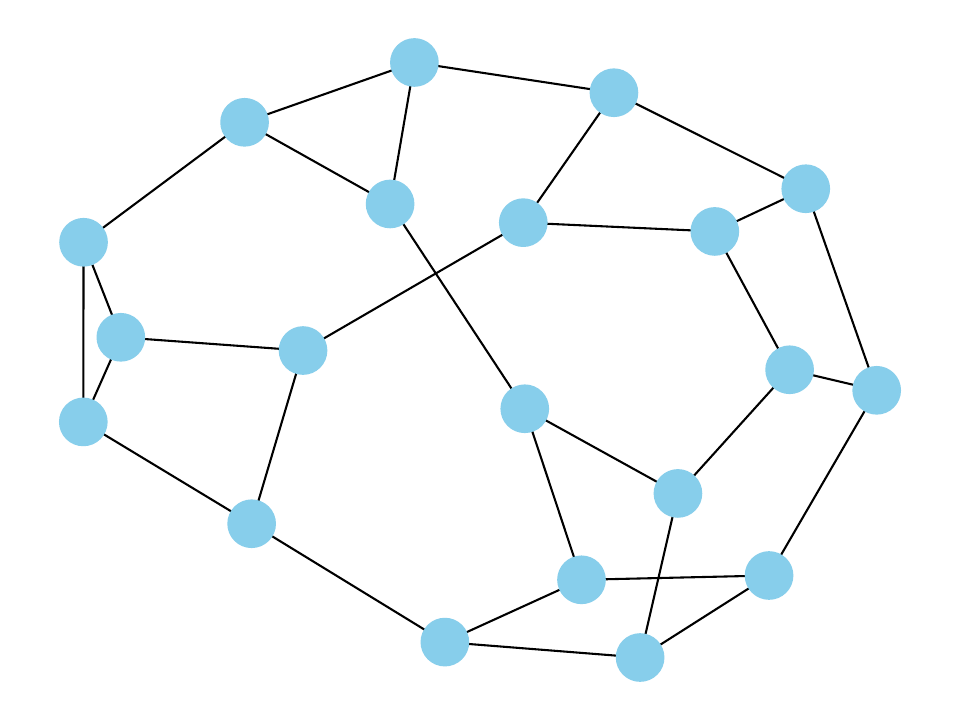}
  \caption{}
  \label{fig:sfig2}
\end{subfigure}
\begin{subfigure}{.33\textwidth}
  \centering
  \includegraphics[width=\linewidth]{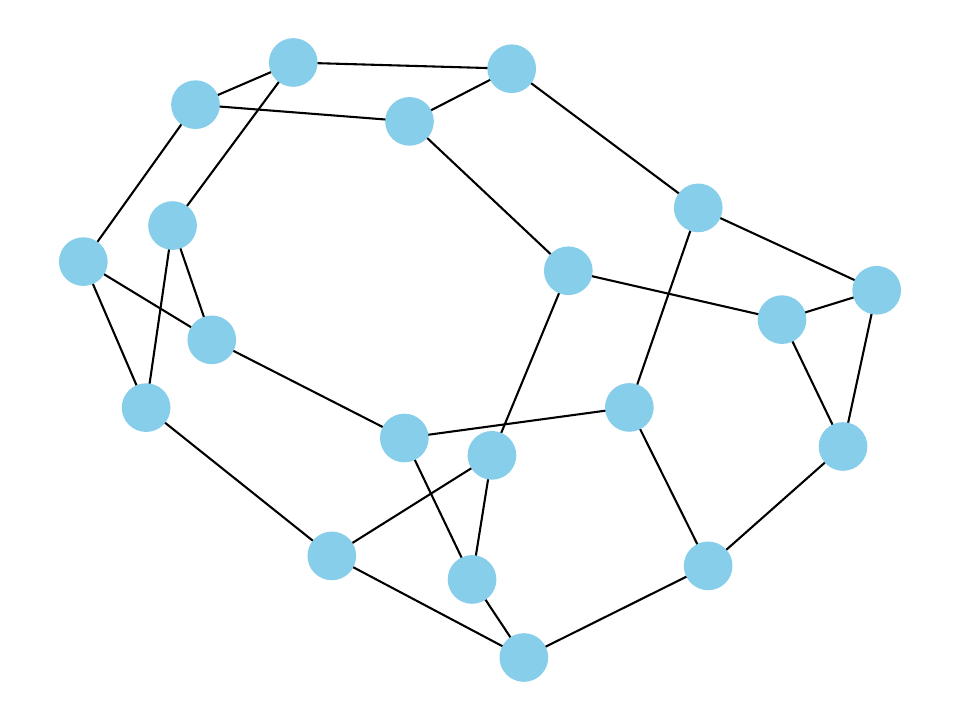}
  \caption{}
  \label{fig:sfig3}
\end{subfigure}
\caption{Regular graphs with $n=20$, $k=3$.}
\label{fig:fig}
\end{figure}

Let $\mathcal{G}_k^{\text{reg}}$ be the set of all $k$-regular graphs ($k \geq 3$) with $n$ vertices.
For each $G \in \mathcal{G}_k^{\text{reg}}$ consider $L(G) = I - \frac{A(G)}{k} $ as a gossip matrix for consensus problem on graph $G$. Then it is known that the expected spectral distribution of $L(G)$ converges to the distribution (from \cite{sole1996spectra, kuhn2015spectra})

\begin{equation} \label{eq:gossip_distr_reg_graph}
    d\mu(\lambda) = \frac{k}{2 \pi} \frac{\sqrt{\frac{4(k - 1)}{k^2} - (1 - \lambda)^2}}{1 - (1 - \lambda)^2} d \lambda,
\end{equation}
which is supported on interval $\left[1-\frac{2\sqrt{k - 1}}{k}, 1+\frac{2\sqrt{k - 1}}{k}\right]$.

The largest and smallest positive eigenvalue of the expected matrix of $L(G)$ are $\lambda_{\max} = 1+\frac{2\sqrt{k - 1}}{k}$ and $\lambda_{\min} = 1-\frac{2\sqrt{k - 1}}{k}$.

\begin{figure}[ht]
    \centering
    \includegraphics[width=4in]{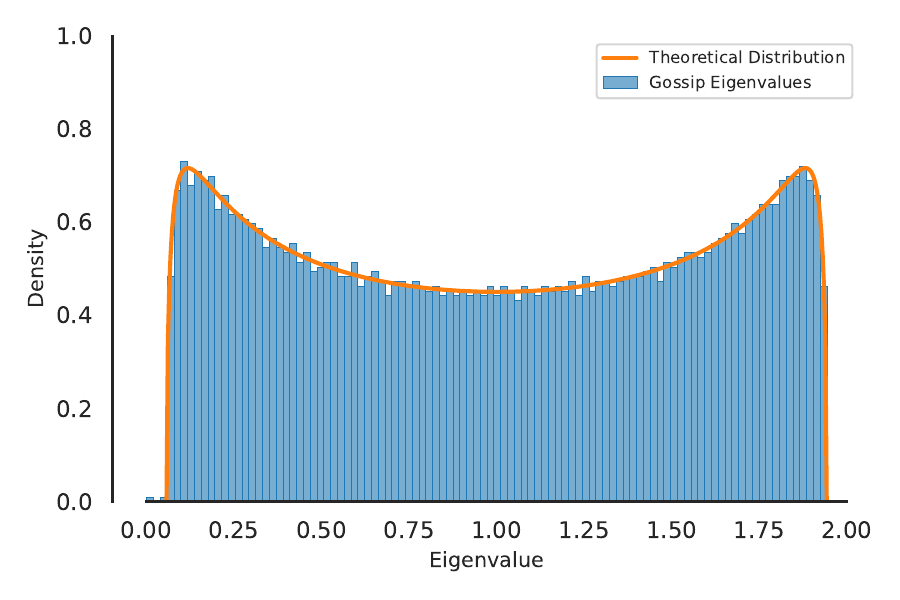}
    \caption{Spectrum of regular graph with $n=5000$, $k=3$.}
    \label{fig:spectrum_plot}
\end{figure}
\subsection{Optimal method}
In this section, we will derive the optimal consensus algorithm for regular graphs. Using the information about spectrum of regular graphs, we construct the sequence of residual orthogonal polynomials with respect to the distribution \eqref{eq:gossip_distr_reg_graph}. The next theorem gives us the recursive representation of these polynomials.

\begin{restatable}[]{theorem}{ResOrthoPolyRG}
\label{th:residual_orthogonal_poly_regular_graph}
    The sequence of residual orthogonal polynomials w.r.t. the weight function $\lambda d\mu(\lambda)$ is
\begin{equation*} 
    \begin{split}
        & Q_0(\lambda) = 1, \\
        & Q_1(\lambda) = 1 - \delta_0 \lambda, \\
        & Q_{t + 1}(\lambda) = \delta_t (1 - \lambda) Q_t(\lambda) + (1 - \delta_t) Q_{t-1}(\lambda),
    \end{split}
\end{equation*}
where $\delta_0 = \frac{k}{k+1}$ and $\delta_t = \left(1 - \frac{k - 1}{k^2} \cdot \delta_{t-1}\right)^{-1}$,  $t \geq 1$.
\end{restatable}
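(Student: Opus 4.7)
The plan is to verify that the polynomials $Q_t$ defined by the stated recurrence satisfy $Q_t(0)=1$, $\deg Q_t=t$, and the orthogonality $\int Q_t Q_s\,\lambda\,d\mu = 0$ for $s\ne t$; uniqueness of residual orthogonal polynomials then identifies them as the claimed sequence. The residual property and degree are straightforward inductions from the recurrence (positivity of $\delta_t$ follows from $\delta_0 = k/(k+1) \in (0, k/(k-1))$ and monotonicity of the map $\delta \mapsto (1-(k-1)\delta/k^2)^{-1}$), so the substance is the orthogonality.

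I would first reduce the weight to a standard form via the substitution $\lambda = 1 - a\cos\theta$, $a := 2\sqrt{k-1}/k$. The factor $\lambda = 1 - a\cos\theta$ cancels half of the denominator $1-(1-\lambda)^2 = (1-a\cos\theta)(1+a\cos\theta)$ in $d\mu$, so
\[
\lambda\,d\mu(\lambda) \;=\; \frac{ka^2}{2\pi}\cdot\frac{\sin^2\theta}{1+a\cos\theta}\,d\theta \quad \text{on }[0,\pi].
\]
This Bernstein-Szegő-type weight suggests expansion in Chebyshev polynomials of the second kind $U_n(\cos\theta) = \sin((n+1)\theta)/\sin\theta$, so I would take the ansatz
\[
\tilde Q_t(\theta) := Q_t(1-a\cos\theta) = A_t\,U_t(\cos\theta) + B_t\,U_{t-1}(\cos\theta),
\]
with $A_0 = 1$, $B_0 := 0$, $A_1 = \delta_0 a/2$, $B_1 = 1-\delta_0$ read off from $Q_0, Q_1$, and establish the ansatz by induction on $t$. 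Using $2\cos\theta\cdot U_n = U_{n+1} + U_{n-1}$, matching coefficients of $U_{t+1}, U_t, U_{t-1}, U_{t-2}$ in the recurrence $Q_{t+1} = \delta_t(1-\lambda)Q_t + (1-\delta_t)Q_{t-1}$ gives forward relations $A_{t+1} = \delta_t(a/2) A_t$, $B_{t+1} = \delta_t(a/2) B_t$, and two ``backward'' equations which reduce to exactly the $\delta$-recurrence $\delta_t = (1 - (k-1)\delta_{t-1}/k^2)^{-1}$ (using $a^2/4 = (k-1)/k^2$). In particular, $B_t/A_t$ is constant for $t \ge 1$.

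For orthogonality I would evaluate the Fourier integrals
\[
I_k \;:=\; \int_0^\pi \frac{\cos(k\theta)}{1+a\cos\theta}\,d\theta \;=\; \frac{\pi\,z_+^k}{\sqrt{1-a^2}},\qquad z_+ := \frac{\sqrt{1-a^2}-1}{a},
\]
by a standard residue computation on $|z|=1$; substituting $\sqrt{1-a^2} = (k-2)/k$ yields $z_+ = -1/\sqrt{k-1}$. Writing $J_{m,n} := (I_{|m-n|} - I_{m+n})/2$, the cross-integral $\int_0^\pi \tilde Q_t\tilde Q_s\,\sin^2\theta/(1+a\cos\theta)\,d\theta$ (for $s<t$) expands into four $J$-terms, and regrouping via the identity
\[
B_tA_s + (A_tA_s + B_tB_s)\rho + A_tB_s\rho^2 \;=\; (B_t+A_t\rho)(A_s+B_s\rho) \qquad (\rho := z_+)
\]
factors the sum as $(B_t + A_tz_+)\cdot\bigl[\rho^{t-s-1}(A_s+B_s\rho) - \rho^{t+s}(B_s+A_s\rho)\bigr]$. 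Hence orthogonality for all $s<t$ reduces to the single equation $B_t + A_tz_+ = 0$ for every $t\ge 1$, and since $B_t/A_t$ is constant this reduces further to the base case $B_1/A_1 = -z_+ = 1/\sqrt{k-1}$, i.e.\ $2(1-\delta_0)/(\delta_0 a) = 1/\sqrt{k-1}$, which solves to exactly $\delta_0 = k/(k+1)$. Symmetry handles $s>t$, and $s=0$ uses the same factorization with the convention $B_0 := 0$.

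The main obstacle I anticipate is this algebraic factorization: the four $J$-terms expand into six distinct monomials in $\rho$ of the form $\rho^{t\pm s+c}$, and it is not obvious a priori that they collapse into a product containing $(B_t+A_tz_+)$. Once the regrouping identity above is spotted, the rest is mechanical bookkeeping. Everything else — the substitution reducing $\lambda\,d\mu$ to $\sin^2\theta/(1+a\cos\theta)\,d\theta$, the residue evaluation of $I_k$, the simplification $\sqrt{1-a^2} = (k-2)/k$, and the inductive verification of the ansatz — is routine.
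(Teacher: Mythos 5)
Your proposal is correct, but it proves the key orthogonality step by a different route than the paper. The paper proceeds constructively: it first transforms $\lambda\,d\mu(\lambda)$ into a Bernstein--Szeg\H{o} weight $\frac{\sqrt{1-\lambda^2}}{k+2\sqrt{q}\lambda}$ (with $q=k-1$), invokes a quoted result (Corollary 1 of Sol\'e, stated as Lemma \ref{lemma:general_orthog_poly}) with $h(z)=\sqrt{q}+z$ to conclude that $A_t=\sqrt{q}\,U_t+U_{t-1}$ is orthogonal, then converts this closed form into the three-term recurrence $P_{t+1}=\frac{k(1-\lambda)}{\sqrt{q}}P_t-P_{t-1}$ and normalizes $Q_t=P_t/P_t(0)$, which is where the $\delta_t$ recurrence and $\delta_0=k/(k+1)$ come from. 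You instead start from the stated recurrence, establish the ansatz $\widetilde Q_t=A_tU_t+B_tU_{t-1}$ by induction (your backward conditions being exactly the $\delta$-recurrence, which I checked: $\delta_t\delta_{t-1}\frac{a^2}{4}+1-\delta_t=0$ with $a^2/4=(k-1)/k^2$), and then prove orthogonality from scratch via the residue evaluation $I_n=\frac{\pi}{\sqrt{1-a^2}}z_+^n$ with $z_+=-1/\sqrt{k-1}$ and the factorization into $(B_t+A_tz_+)(\cdots)$ --- in effect re-deriving the special case of the Bernstein--Szeg\H{o} lemma that the paper cites. Your identification of the single condition $B_t/A_t=-z_+=1/\sqrt{k-1}$ is precisely the ratio $1:\sqrt{q}$ in the paper's $A_t=\sqrt{q}\,U_t+U_{t-1}$, and your base-case computation $2(1-\delta_0)/(\delta_0 a)=1/\sqrt{k-1}\Rightarrow\delta_0=k/(k+1)$ matches the paper's $\delta_0=\frac{k}{\sqrt{q}}\cdot\frac{P_0(0)}{P_1(0)}$. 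What your approach buys is self-containedness (no external orthogonality lemma) and a direct verification of the theorem as stated; what it costs is the contour integral and the algebraic regrouping, both of which you have correctly anticipated and which do go through. One small point worth making explicit in a write-up: $\delta_t\in(1,\,k/(k-1))$ for $t\ge1$ (your invariant-interval remark), which guarantees the recurrence never divides by zero and the leading coefficients $A_t=\prod\delta_i(a/2)^t$ never vanish, so $\deg Q_t=t$ and $\int Q_t^2\,\lambda\,d\mu>0$ as required by the definition of an orthogonal family.
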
 

In appendix \ref{app:orthog_poly}, we also derive the representation of these polynomials via Chebyshev polynomials of the second kind. This kind of representation makes it easier to compute the convergence rate of optimal method.

From the above theorem, we can derive the method with minimum expected error for consensus problem on regular graphs.

\begin{algorithm}[H]
\caption{Optimal average-case method for regular graphs}\label{alg:opt_algo_gossip_reg_graph}
\begin{algorithmic}
\State \textbf{Input:} starting guess $x_0$, regular parameter $k$, $\delta_0 = \frac{k}{k+1}$.
\State \textbf{Initialize:} $x_1 = x_0 - \delta_0 \mathbf{L} x_0$
\For {$t = 1, 2, \dots$}
\State $\delta_t = \left(1 - \frac{k - 1}{k^2} \cdot \delta_{t-1}\right)^{-1}$
\State $x_{t+1} = x_t + (\delta_t - 1) (x_t - x_{t-1}) - \delta_t \mathbf{L} x_{t}$
\EndFor
\end{algorithmic}    
\end{algorithm}

Our optimal method is a momentum-based approach with varying step sizes and momentum, determined by the sequence $\delta_t$. This sequence depends only on the degree parameter of a regular graph. The following lemma demonstrates that this sequence converges.

\begin{lemma} \label{lemma:convergence_delta}
The sequence $\{\delta_t\}_{t=0}^{\infty}$ defined in Theorem \ref{th:residual_orthogonal_poly_regular_graph} is convergent, with
\begin{equation}
    \lim_{t \to \infty} \delta_t = \frac{k}{k - 1}.
\end{equation}
\end{lemma}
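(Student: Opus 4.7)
The plan is to study the recurrence as iteration of a one-dimensional map and apply the monotone convergence theorem. Write $\delta_t = g(\delta_{t-1})$ where $g(x) = \bigl(1 - \tfrac{k-1}{k^2} x\bigr)^{-1}$. First I would locate the fixed points of $g$: setting $x = g(x)$ yields the quadratic $\tfrac{k-1}{k^2} x^2 - x + 1 = 0$, whose discriminant simplifies to $\tfrac{(k-2)^2}{k^2}$, giving the two roots $x_- = \tfrac{k}{k-1}$ and $x_+ = k$. So the claimed limit is one of the two candidate fixed points, and the task reduces to showing $\delta_t \to x_-$ rather than $x_+$.

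Second, I would prove by induction that $0 < \delta_t < \tfrac{k}{k-1}$ for every $t \ge 0$. The base case is immediate from $\delta_0 = \tfrac{k}{k+1} < 1 < \tfrac{k}{k-1}$. For the step, if $0 < \delta_{t-1} < \tfrac{k}{k-1}$, then $\tfrac{k-1}{k^2}\delta_{t-1} < \tfrac{1}{k}$, hence $1 - \tfrac{k-1}{k^2}\delta_{t-1} \in \bigl(\tfrac{k-1}{k},\, 1\bigr)$, and taking reciprocals gives $\delta_t \in \bigl(1,\, \tfrac{k}{k-1}\bigr)$. This simultaneously establishes positivity and the upper bound.

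Third, I would prove monotonicity $\delta_t > \delta_{t-1}$. A direct algebraic check shows that $\delta_t > \delta_{t-1}$ is equivalent to $\tfrac{k-1}{k^2}\delta_{t-1}^2 - \delta_{t-1} + 1 > 0$, which is exactly the quadratic whose roots are $\tfrac{k}{k-1}$ and $k$. Since the leading coefficient is positive and $\delta_{t-1} < \tfrac{k}{k-1}$ lies below the smaller root by the previous step, the quadratic is strictly positive there, so the sequence is strictly increasing.

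Finally, by the monotone convergence theorem, $\{\delta_t\}$ converges to some $L \in \bigl[\delta_0,\, \tfrac{k}{k-1}\bigr]$. Passing to the limit in the recurrence (which is legitimate because $g$ is continuous on $\bigl[0,\tfrac{k}{k-1}\bigr]$) shows that $L$ is a fixed point of $g$, so $L \in \{\tfrac{k}{k-1}, k\}$. Since $L \le \tfrac{k}{k-1} < k$, we conclude $L = \tfrac{k}{k-1}$. I do not expect any serious obstacle here: the only slightly delicate point is recognizing that the same quadratic $\tfrac{k-1}{k^2}x^2 - x + 1$ governs both the fixed-point equation and the sign of $\delta_t - \delta_{t-1}$, which makes the boundedness and monotonicity arguments align cleanly.
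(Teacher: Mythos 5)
Your proof is correct, and it takes a genuinely different (and self-contained) route from the one implicit in the paper. You treat $\delta_t = g(\delta_{t-1})$ with $g(x) = \bigl(1 - \tfrac{k-1}{k^2}x\bigr)^{-1}$ as an iterated map: the fixed-point quadratic $\tfrac{k-1}{k^2}x^2 - x + 1 = 0$ has roots $\tfrac{k}{k-1}$ and $k$, your induction keeps the orbit in $\bigl(0, \tfrac{k}{k-1}\bigr)$, the same quadratic shows strict monotonicity below the smaller root, and monotone convergence plus continuity of $g$ on that interval forces the limit to be $\tfrac{k}{k-1}$. All steps check out (note $g$ is continuous there since its pole sits at $\tfrac{k^2}{k-1} > \tfrac{k}{k-1}$, and $k \ge 3$ keeps the two roots distinct). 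The paper does not give a standalone proof of this lemma; instead, the machinery in the appendix (inside the proof of Theorem \ref{th:expected_error_optimal_method}) effectively solves the recurrence in closed form: writing $\delta_t = \tfrac{k}{\sqrt{q}}\,\tfrac{P_t(0)}{P_{t+1}(0)}$ and solving the linear three-term recurrence for $P_t(0)$ yields $\delta_t = k\,\tfrac{Q_t(k)}{Q_{t+1}(k)}$ with $Q_t(k) = \tfrac{k(k-1)^t - 2}{k-2}$, from which the limit $\tfrac{k}{k-1}$ is immediate. The trade-off: the paper's closed form also gives the exact rate at which $\delta_t$ approaches its limit (which feeds into the $\Theta$-estimate of the expected error), whereas your argument is more elementary and independent of the orthogonal-polynomial representation, but delivers only the limit itself.
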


From this lemma, we can prove that the coefficients of the optimal average-case Algorithm \ref{alg:opt_algo_gossip_reg_graph} converge to those of Polyak Momentum Method \ref{alg:PM}. Because
$\lim\limits_{t \to \infty} (\delta_t - 1) = \frac{1}{k - 1} = m$ and $\lim\limits_{t \to \infty} \delta_t = \frac{k}{k-1} = h$. This coincides with the more general result for quadratic problems in \cite{scieur2020universal}.
\subsection{Expected errors} 

\subsubsection{Optimal method}
The optimal Algorithm \ref{alg:opt_algo_gossip_reg_graph} aims to efficiently solve consensus problems by leveraging the properties of the gossip matrix associated with random $k$-regular graphs. This method is designed to minimize the expected error over iterations, providing a robust solution for distributed optimization tasks. The following theorem quantifies the expected error for the optimal method when applied to the given problem.

\begin{restatable}[]{theorem}{ExpectedErrorOptMethod}
\label{th:expected_error_optimal_method}
    If we apply Algorithm \ref{alg:opt_algo_gossip_reg_graph} to problem \eqref{eq:quad_prob}, where $L$ is the gossip matrix of random $k$-regular graphs, then
    \begin{equation} \label{eq:expected_error_optimal_method}
    \mathbb{E} \|x_t - x_*\|^2 = \Theta \left( \left(\frac{1}{k - 1}\right)^t \cdot \left( \frac{1}{1 + \frac{2}{k - 2} \left(1 - \frac{1}{(k - 1)^t}\right)}\right)^2 \right).
    \end{equation}
\end{restatable}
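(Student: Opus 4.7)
The plan is to invoke Theorem~\ref{th:optimal_method}, which asserts $\mathbb{E}\|x_t - x_*\|^2 = R^2 \int Q_t(\lambda)\, d\mu(\lambda)$, and then to compute the integral $I_t := \int Q_t\, d\mu$ in closed form. The residual-orthogonality of $Q_t$ with respect to $\lambda\, d\mu$ from Theorem~\ref{th:residual_orthogonal_poly_regular_graph} gives $\int \lambda\, Q_t\, d\mu = 0$ for every $t \geq 1$, since the constant $Q_0 = 1$ has degree strictly less than $t$. Integrating the three-term recurrence of Theorem~\ref{th:residual_orthogonal_poly_regular_graph} against $d\mu$ therefore collapses it to the scalar two-step recursion
\begin{equation*}
    I_{t+1} = \delta_t I_t + (1 - \delta_t) I_{t-1}, \qquad t \geq 1,
\end{equation*}
with initial data $I_0 = \int d\mu = 1$ and $I_1 = 1 - \delta_0 \int \lambda\, d\mu = 1 - \delta_0 = 1/(k+1)$. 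Here I use that \eqref{eq:gossip_distr_reg_graph} is symmetric about $\lambda = 1$, so $\int \lambda\, d\mu = 1$.

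To solve the recursion I first linearise the $\delta_t$ recurrence. The substitution $g_t := 1/\bigl(\tfrac{k}{k-1} - \delta_t\bigr)$ reduces the fractional-linear recursion in Theorem~\ref{th:residual_orthogonal_poly_regular_graph} to the linear one $g_t = (k-1) g_{t-1} + (k-1)/k$, which gives the explicit form
\begin{equation*}
    \delta_t = \frac{k}{k-1}\cdot\frac{k(k-1)^{t+1} - 2(k-1)}{k(k-1)^{t+1} - 2}.
\end{equation*}
Substituting this into the scalar recursion and verifying by induction (equivalently: introducing the first differences $J_t := I_t - I_{t-1}$, which satisfy the factorable relation $J_{t+1} = (\delta_t - 1) J_t$, and evaluating the telescoping product $\prod_{s=1}^{t-1}(\delta_s - 1)$ from the closed form above) yields
\begin{equation*}
    I_t = \frac{k-2}{k(k-1)^t - 2}, \qquad t \geq 0.
\end{equation*}

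Finally, to match the statement of the theorem, the algebraic identity $\bigl(1 + \tfrac{2}{k-2}(1 - (k-1)^{-t})\bigr)^{-1} = \dfrac{(k-2)(k-1)^t}{k(k-1)^t - 2}$ rewrites the right-hand side of \eqref{eq:expected_error_optimal_method} as $(k-2)^2 (k-1)^t / (k(k-1)^t - 2)^2$. The ratio of $I_t$ to this expression equals $(k(k-1)^t - 2)/\bigl((k-2)(k-1)^t\bigr)$, which is monotone in $t$ and uniformly confined to the interval $[1, k/(k-2)]$; this delivers the matching upper and lower $\Theta$-bounds. The main technical obstacle is the guessing and verification of the closed form for $I_t$: the product $\prod_{s=1}^{t-1}(\delta_s - 1)$ does not telescope in an obvious way, so it is cleaner to guess the answer $(k-2)/(k(k-1)^t - 2)$ from the first few values and then verify it directly by plugging the explicit $\delta_t$ back into the two-step recursion, at which point the argument reduces to a short algebraic manipulation.
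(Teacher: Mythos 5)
Your proposal is correct, and it takes a genuinely different route from the paper. The paper's proof works directly with $\mathbb{E}\|x_t-x_*\|^2 = R^2\int Q_t^2\,d\mu$ (Theorem~\ref{th:expected_error}): it expresses $Q_t$ through Chebyshev polynomials of the second kind, substitutes $u=\cos\theta$, bounds the factor $\tfrac{1}{1-M\cos^2\theta}$ between $1$ and $\tfrac{1}{1-M}$, and evaluates the resulting trigonometric integrals, which reduces the problem to computing the product $\prod_{i=0}^{t-1}\bigl(\tfrac{\sqrt{k-1}}{k}\delta_i\bigr)^2$ in closed form (done by an induction essentially equivalent to your linearisation of the $\delta_t$ recursion). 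You instead exploit the optimality identity of Theorem~\ref{th:optimal_method}, $\mathbb{E}\|x_t-x_*\|^2 = R^2\int Q_t\,d\mu$, which linearises the whole computation: integrating the three-term recurrence against $d\mu$ and using $\int\lambda Q_t\,d\mu=0$ (orthogonality to $Q_0$) and $\int\lambda\,d\mu=1$ (symmetry of \eqref{eq:gossip_distr_reg_graph} about $\lambda=1$) gives a scalar recursion solvable exactly; I checked that your closed forms for $\delta_t$ and $I_t=\tfrac{k-2}{k(k-1)^t-2}$ satisfy the recursions and initial data, and that $I_t$ agrees with the paper at $t=0,1$ and lies inside the paper's two-sided bounds. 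Your route buys more: an \emph{exact} expression for the expected error rather than a $\Theta$-estimate with a gap of $\bigl(\tfrac{k}{k-2}\bigr)^2$ between the bounds (your ratio to the stated rate is pinned to $[1,\tfrac{k}{k-2}]$). What the paper's route buys is that it does not rely on the optimality theorem, only on the generic decomposition of Theorem~\ref{th:expected_error}, so the same Chebyshev/trigonometric machinery transfers verbatim to non-optimal methods --- it is reused in the proof of Theorem~\ref{th:expected_error_HB} for Heavy Ball, where no identity $\int P_t^2\,d\mu=\int P_t\,d\mu$ is available.
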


\subsubsection{Heavy Ball method}
According to Lemma \ref{lemma:convergence_delta}, our optimal method asymptotically approaches the behavior of the Heavy Ball method. To evaluate this relationship, we derive the expected error of the Heavy Ball method for comparison with the optimal method. The expected error is quantified in the following theorem.

\begin{restatable}[]{theorem}{ExpectedErrorHB}
\label{th:expected_error_HB}
    If we apply Algorithm \ref{alg:PM} to problem \eqref{eq:quad_prob}, where $L$ is the gossip matrix of random $k$-regular graphs, then
    \begin{equation}
    \mathbb{E} \|x_t - x_*\|^2 = \Theta \left( \left( \frac{1}{k - 1} \right)^t\right)
    \end{equation}
\end{restatable}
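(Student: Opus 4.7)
The plan is to reduce the expected error to a polynomial integral via Theorem \ref{th:expected_error}, substitute the explicit Chebyshev representation \eqref{eq:momentum_poly_via_chebysher} of the Heavy Ball residual polynomial, and then show that the resulting integral is $\Theta(1)$ by a trigonometric change of variables. First I would plug in the spectral endpoints for the regular graph, $\lambda_{\max} = 1+\tfrac{2\sqrt{k-1}}{k}$ and $\lambda_{\min} = 1-\tfrac{2\sqrt{k-1}}{k}$, so that $\lambda_{\max}+\lambda_{\min}=2$ and $\sqrt{\lambda_{\max}\lambda_{\min}}=\tfrac{k-2}{k}$. This gives the particularly clean Polyak parameters $m=\tfrac{1}{k-1}$ and $h=\tfrac{k}{k-1}$, so the overall $m^t=(k-1)^{-t}$ factor in \eqref{eq:momentum_poly_via_chebysher} already produces the conjectured convergence rate. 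It then remains to show that the integral of the squared bracketed term against $d\mu$ stays bounded above and below by positive constants (depending only on $k$, not on $t$).

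Next, I would make the substitution $\cos\theta = \sigma(\lambda)=\tfrac{k(1-\lambda)}{2\sqrt{k-1}}$, which is a bijection from $[\lambda_{\min},\lambda_{\max}]$ onto $[-1,1]$ (equivalently $\theta\in[0,\pi]$). Direct computation turns the density in \eqref{eq:gossip_distr_reg_graph} into
\begin{equation*}
d\mu = \frac{2(k-1)}{\pi k}\cdot \frac{\sin^2\theta}{\sin^2\theta + \tfrac{(k-2)^2}{k^2}\cos^2\theta}\, d\theta,
\end{equation*}
using $1-(1-\lambda)^2 = \sin^2\theta + \tfrac{(k-2)^2}{k^2}\cos^2\theta$. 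Since the denominator lies between $\tfrac{(k-2)^2}{k^2}$ and $1$ for $k\ge 3$, this weight is squeezed between two positive multiples of $\sin^2\theta$, with constants depending only on $k$.

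The main algebraic step is to simplify $P_t^{\mathrm{MGD}}(\lambda)\sin\theta$. Plugging $T_t(\cos\theta)=\cos(t\theta)$, $U_t(\cos\theta)=\sin((t+1)\theta)/\sin\theta$, together with $\tfrac{2m}{1+m}=\tfrac{2}{k}$ and $\tfrac{1-m}{1+m}=\tfrac{k-2}{k}$, into \eqref{eq:momentum_poly_via_chebysher} and using the product-to-sum identities, I expect the bracket to telescope into
\begin{equation*}
P_t^{\mathrm{MGD}}(\lambda)\sin\theta = m^{t/2}\Bigl(\tfrac{k-1}{k}\sin((t+1)\theta) - \tfrac{1}{k}\sin((t-1)\theta)\Bigr).
\end{equation*}
This is the crucial identity that makes the integral computable; I expect this to be the main obstacle, since it is where the specific balance between the two Chebyshev kinds becomes essential.

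Finally, squaring the previous display and integrating over $[0,\pi]$ against $d\theta$, orthogonality of $\{\sin(n\theta)\}$ on $[0,\pi]$ kills the cross term as soon as $t\ge 2$ (so that $t+1\neq t-1$) and leaves $\int_0^\pi (P_t^{\mathrm{MGD}}(\lambda)\sin\theta)^2\,d\theta = m^t\cdot\tfrac{\pi(k^2-2k+2)}{2k^2}$, a strictly positive constant in $k$. Combining this with the two-sided bound on the $d\mu$-weight (a constant $\cdot \sin^2\theta$) yields $\int P_t^2\,d\mu = m^t\cdot\Theta(1)$. Inserting this into Theorem \ref{th:expected_error} gives $\mathbb{E}\|x_t-x_*\|^2 = \Theta\bigl((k-1)^{-t}\bigr)$, as claimed.
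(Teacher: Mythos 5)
Your proposal is correct and follows essentially the same route as the paper's proof: reduce to $R^2\int (P_t^{\mathrm{MGD}})^2\,d\mu$ via Theorem~\ref{th:expected_error}, substitute $\cos\theta=\sigma(\lambda)$, squeeze the factor $1-(1-\lambda)^2 = 1-M\cos^2\theta$ between $k$-dependent constants, and evaluate the remaining trigonometric integral exactly to get $m^t\cdot\Theta(1)$ with $m=\tfrac{1}{k-1}$. The only (cosmetic) difference is that you combine the $T_t$ and $U_t$ contributions into the single expression $\tfrac{k-1}{k}\sin((t+1)\theta)-\tfrac{1}{k}\sin((t-1)\theta)$ before squaring, whereas the paper expands $(\alpha T_t+\beta U_t)^2$ and integrates the three terms separately; both yield the same constant $\tfrac{\pi((k-1)^2+1)}{2k^2}$.
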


We observe that both the optimal method and the Heavy Ball method converge at a similar rate. Although the optimal method shows slightly better performance, the difference is insignificant. This conclusion is also observed in the practical experiments in Section \ref{sec:experiments}.

\subsection{Experiments} \label{sec:experiments}
We conduct experiments with the optimal method and the classical methods from Section \ref{sec:poly_methods} and Appendix \ref{app:sup_algo} to evaluate their effectiveness. The distribution of eigenvalues of the gossip matrix changes depending on different values of the parameter $k$ of regular graphs. We present results for several values of $k$ to illustrate the effectiveness of the methods in different cases. The experimental results are shown in Figure \ref{fig:convergence_plots}. For generation of random $k$-regular graphs we use library Networkx~\cite{hagberg2008exploring}.

\begin{figure}[ht]
    \centering
    \includegraphics[width=\linewidth]{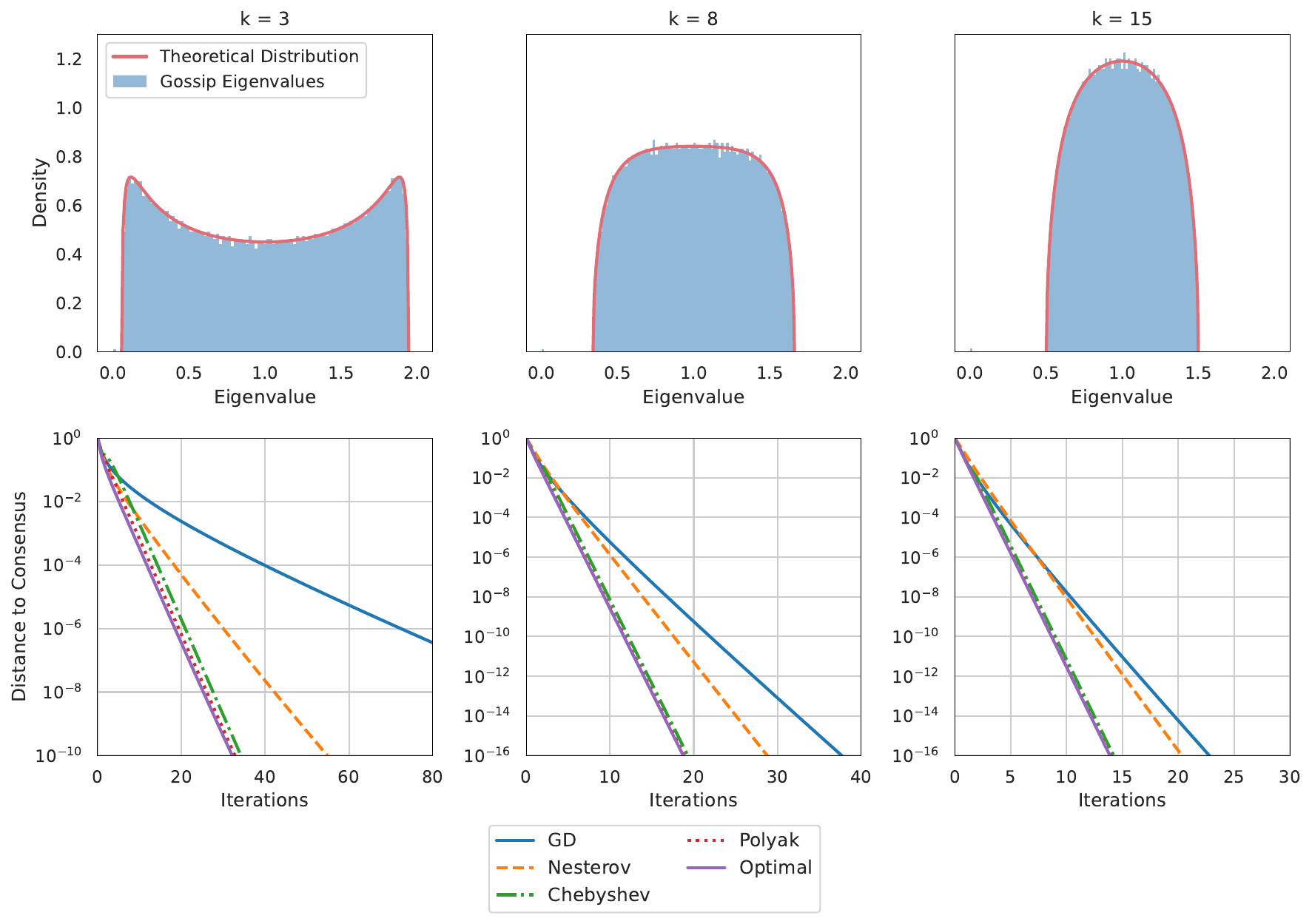}
     \caption{\textbf{Comparison of convergence speeds of algorithms on regular graphs.} The top row shows the spectral distribution of the gossip matrix for $5000$-vertex regular graphs with degree parameters $k = 3$, $8$, and $15$. The bottom row displays the normalized distance to consensus of each algorithm at each iteration, with the y-axis scaled logarithmically. In these experiments, each node vector size is set to $d = 1000$.}
    \label{fig:convergence_plots}
\end{figure}

In the average-case analysis for quadratic optimization, only methods with predefined parameters are typically considered. However, in this work, we also experimented with the conjugate gradient method and compared its performance with our optimal method (see Figure \ref{fig:opt_vs_cg}). From practical experiments, we have drawn the following conclusions:

\begin{itemize}
    \item When the number of vertices in the graph is not very large (e.g., 100-500), the conjugate gradient method exhibits a significantly faster convergence rate compared to our optimal method.
    \item As the number of vertices increases, the convergence speeds of the two methods become nearly identical.
\end{itemize}

This behavior can be explained logically. When the number of vertices is large, the spectrum of the gossip matrix of a random regular graph closely approximates the expected spectral distribution. In this scenario, our optimal method demonstrates its efficiency, as it is designed based on the expected spectral distribution. Consequently, for large-scale problems, our optimal method can achieve the convergence rate of the conjugate gradient method, which is known for its rapid convergence. Overall, our findings highlight the robustness and scalability of our optimal method in handling large-scale consensus problems on regular graphs.

\begin{figure}
    \centering
    \includegraphics[width=\linewidth]{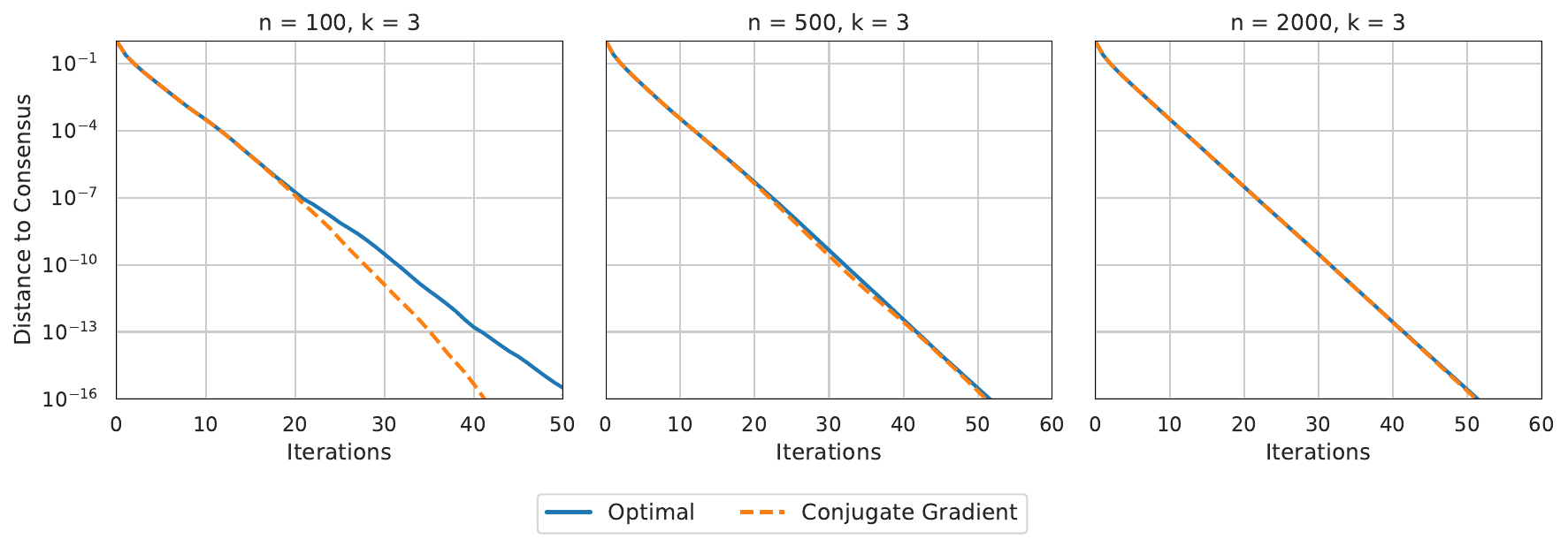}
    \caption{Convergence speeds of the optimal method and the conjugate gradient method}
    \label{fig:opt_vs_cg}
\end{figure}

\section{Conclusions and Future Work}
This work demonstrates that given the spectrum of a regular graph, we can derive orthogonal polynomials corresponding to this spectrum and, consequently, develop an optimal algorithm for the consensus problem in average-case. In addition to this study, we explored the spectrum of gossip matrices in other graph types such as random Erdős-Rényi graphs \cite{ding2010spectral}, scale-free graphs, and small-world graphs \cite{mirchev2017spectra, samukhin2008laplacian}. However, addressing these graphs turned out to be more complex, and we have not yet been able to study optimal algorithms for these types of graphs. Therefore, analyzing the consensus problem on these types of graphs remains an open question for future research.

Furthermore, it has been proven in \cite{scieur2020universal} that if the spectrum of the matrix is strictly positive in the interval $\left(\lambda_{\min},\, \lambda_{\max}\right)$, the optimal method converges to the Polyak method. This raises several intriguing questions: When the gossip matrix spectrum of graph is not continuous and divided into disjoint pieces, such as Laplacian of ring of cliques, is there an algorithm significantly better than Polyak’s method? What will the optimal algorithm be in such cases? Are there connections between the optimal method and Polyak’s method, or with the cyclic Heavy Ball method (\cite{goujaud2022super})? Addressing these questions could provide valuable insights and directions for future research.

\newpage
\subsection*{Acknowledgements}
The research is supported by the Ministry of Science and Higher Education of the Russian Federation (Goszadaniye) No. 075-03-2024-117/6, project No. FSMG-2024-0025.

\bibliographystyle{abbrv}
\bibliography{references}
\newpage
\appendix{\Large Supplementary Material}

\section{Derivation of optimal method} \label{app:orthog_poly}
Let us consider a measure
\begin{equation} \label{eq:modified_semicircle_measure}
    d\nu(\lambda) = \frac{2}{\pi} \frac{\sqrt{1 - \lambda^2}}{\rho(\lambda)} d\lambda.
\end{equation}
The polynomials orthogonal w.r.t. measure \eqref{eq:modified_semicircle_measure} are called Bernstein – Szegő polynomials.

\begin{lemma}[Corollary 1 from \cite{sole1996spectra}] \label{lemma:general_orthog_poly}
Let
\begin{equation} \label{eq:rho_cos}
    \rho(\cos{\theta}) = |h(e^{i\theta})|^2
\end{equation}
for some complex polynomial $h$ of the same degree as $\rho$. If $h(z) = a + bz + cz^2$ satisfies \eqref{eq:rho_cos} and has real coefficients $a$, $b$, $c$, then $P_0(\lambda) = 1$, $P_1(\lambda) = a U_1(\lambda) + bU_0(\lambda)$, and for $t \geq 2$,
\begin{equation} \label{eq:general_form_for_orthogonal_poly}
    P_t(\lambda) = a U_t(\lambda) + b U_{t-1}(\lambda) + cU_{t-2}(\lambda)
\end{equation}
constitute a family of orthogonal polynomials with respect to $\nu(\lambda)$.
\end{lemma}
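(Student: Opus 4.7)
The plan is to verify orthogonality $\int_{-1}^{1} P_t P_s \, d\nu = 0$ for $t \neq s$ directly, by trigonometric substitution followed by a contour integral on the unit circle. First I would substitute $\lambda = \cos\theta$, so that $d\nu = \frac{2}{\pi}\frac{\sin^2\theta}{\rho(\cos\theta)}\, d\theta$ on $[0,\pi]$, and use $U_n(\cos\theta)\sin\theta = \sin((n+1)\theta)$ to rewrite
\[
P_t(\cos\theta)\sin\theta = a\sin((t+1)\theta) + b\sin(t\theta) + c\sin((t-1)\theta) \qquad (t\geq 1).
\]
The pivotal algebraic step is to recognize that this equals $\operatorname{Im}\bigl[e^{i(t+1)\theta}\, h(e^{-i\theta})\bigr]$: pulling $e^{i(t+1)\theta}$ out of the complex-exponential expansion produces $a + be^{-i\theta} + ce^{-2i\theta} = h(e^{-i\theta})$. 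This one identity is the bridge between the Chebyshev side and the factorization $\rho(\cos\theta) = h(e^{i\theta})\, h(e^{-i\theta})$.

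Second, I would form the product $P_t(\cos\theta) P_s(\cos\theta) \sin^2\theta$, divide by $\rho(\cos\theta)$, and apply the identity $2\operatorname{Im}(A)\operatorname{Im}(B) = \operatorname{Re}(A\overline{B}) - \operatorname{Re}(AB)$. The expansion splits cleanly into a ``diagonal'' contribution that collapses to $\tfrac{1}{2}\cos((t-s)\theta)$ (yielding $\tfrac{1}{2}\delta_{t,s}$ after integration against $\tfrac{2}{\pi}\, d\theta$), plus an ``off-diagonal'' piece
\[
\tfrac{1}{2}\operatorname{Re}\!\left[\frac{e^{i(t+s+2)\theta}\, h(e^{-i\theta})}{h(e^{i\theta})}\right].
\]
Everything then reduces to showing this off-diagonal piece integrates to zero.

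Third, I would parametrize $z = e^{i\theta}$, extend to $[-\pi,\pi]$ using the evenness of the integrand (a direct consequence of $\overline{h(e^{i\theta})} = h(e^{-i\theta})$ for real coefficients), and recast the remaining integral as
\[
\frac{1}{i}\oint_{|z|=1} \frac{z^{t+s+1}\, h(z^{-1})}{h(z)}\, dz = \frac{1}{i}\oint_{|z|=1} \frac{z^{t+s-1}(az^2 + bz + c)}{h(z)}\, dz,
\]
using $h(z^{-1}) = z^{-2}(az^2+bz+c)$. For any $t+s \geq 1$ the numerator is a genuine polynomial, and provided $h$ is chosen with no zeros in the closed unit disk the integrand is holomorphic throughout $|z|\leq 1$, so the integral vanishes by Cauchy's theorem. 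Orthogonality of $P_0 \equiv 1$ against each $P_s$ with $s \geq 1$ falls out of essentially the same contour computation (the exponent becomes $z^{s-1}$ or $z^{s+1}$, both still holomorphic), and positivity $\int P_t^2\, d\nu > 0$ follows immediately from the surviving diagonal contribution $\tfrac{1}{2}$.

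The main obstacle is the implicit assumption on $h$: the contour argument requires $h$ to have all its zeros outside the closed unit disk, a point the lemma does not state explicitly. This is exactly the outer spectral factor produced by the Fejer--Riesz theorem applied to the nonnegative trigonometric polynomial $\rho(\cos\theta)$, so any $h$ obtained from that canonical factorization will work; the lemma should be read as asserting the existence of orthogonal polynomials for \emph{this} preferred choice of $h$. Once that point is settled, the rest is routine bookkeeping with Chebyshev identities and a single residue-free invocation of Cauchy's theorem.
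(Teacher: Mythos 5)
The paper does not prove this lemma at all; it is imported verbatim as Corollary~1 of \cite{sole1996spectra}, so there is no in-paper argument to compare against. Your proposal is a correct, self-contained proof along the classical Bernstein--Szeg\H{o} lines: the identity $P_t(\cos\theta)\sin\theta=\operatorname{Im}\bigl[e^{i(t+1)\theta}h(e^{-i\theta})\bigr]$ is right; the formula $2\operatorname{Im}(A)\operatorname{Im}(B)=\operatorname{Re}(A\overline{B})-\operatorname{Re}(AB)$ does split the integrand into a piece where $\rho(\cos\theta)=h(e^{i\theta})h(e^{-i\theta})$ cancels exactly, leaving $\cos((t-s)\theta)$ whose integral against $\tfrac{2}{\pi}\,d\theta$ is $\delta_{t,s}$ (you are off by an immaterial factor of $2$ here), and a piece that becomes $\tfrac{1}{i}\oint_{|z|=1} z^{t+s-1}(az^2+bz+c)\,h(z)^{-1}\,dz$; and the extension to $[-\pi,\pi]$ does follow from the realness of $a,b,c$. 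Your main caveat is also the right one: the Cauchy step needs $h$ to be zero-free on the closed unit disk, a hypothesis absent from the statement, and without it the conclusion is genuinely false --- e.g.\ $\rho(\lambda)=5+4\lambda$ admits both $h(z)=2+z$ and $h(z)=1+2z$, and since orthogonal polynomials of a given degree are unique up to scaling, only one of the two resulting families can be orthogonal. This missing hypothesis is exactly the Fej\'er--Riesz outer factor, and it is satisfied in the only place the paper invokes the lemma: there $\rho(\lambda)=k+2\sqrt{k-1}\,\lambda$ and $h(z)=\sqrt{k-1}+z$ has its single zero at $-\sqrt{k-1}$, of modulus at least $\sqrt{2}>1$ for $k\ge 3$. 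So your argument is sound and in fact supplies both a proof and a needed clarification of hypotheses that the paper delegates entirely to the citation.
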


To simplify the presentation of expressions, we use the notation $q = k - 1$. 
\begin{theorem} \label{th:orthogonal_poly_reg_graph_cheb_form}
    The sequence of polynomials
\begin{equation} \label{eq:orthogonal_poly_reg_graph_cheb_form}
\begin{split}
    & P_0(\lambda) = \sqrt{q}, \\
    & P_t(\lambda)  = \sqrt{q} \cdot U_t\left(\frac{k (1 - \lambda)}{2 \sqrt{q}} \right) + U_{t-1}\left(\frac{k (1 - \lambda)}{2 \sqrt{q}} \right), \; t \geq 1 
\end{split}
\end{equation}
is orthogonal w.r.t. the weight function $\lambda \,d\mu(\lambda)$, where $\mu(\lambda)$ is defined in \eqref{eq:gossip_distr_reg_graph}.
\end{theorem}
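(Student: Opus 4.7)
My plan is to change variable so as to recast $\lambda\, d\mu(\lambda)$ into a Bernstein--Szeg\H{o} measure on $[-1,1]$, and then quote Lemma \ref{lemma:general_orthog_poly}. Let $q = k - 1$ and set
\[
u \;=\; \frac{k(1-\lambda)}{2\sqrt{q}}, \qquad \lambda \;=\; 1 - \tfrac{2\sqrt{q}}{k}u,
\]
which maps the support $\bigl[1 - \tfrac{2\sqrt{q}}{k},\, 1 + \tfrac{2\sqrt{q}}{k}\bigr]$ of $\mu$ bijectively onto $[-1,1]$. Under this substitution, $\sqrt{\tfrac{4q}{k^2}-(1-\lambda)^2} = \tfrac{2\sqrt{q}}{k}\sqrt{1-u^2}$, and the denominator factors as $1 - (1-\lambda)^2 = \bigl(1-\tfrac{2\sqrt{q}}{k}u\bigr)\bigl(1+\tfrac{2\sqrt{q}}{k}u\bigr)$. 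The prefactor $\lambda$ is exactly $1 - \tfrac{2\sqrt{q}}{k}u$, so one of the two linear denominator factors cancels, and a short calculation reduces the pushforward to
\[
\lambda\, d\mu(\lambda) \;=\; q\cdot \frac{2}{\pi}\,\frac{\sqrt{1-u^2}}{k + 2\sqrt{q}\,u}\,du.
\]
The overall positive constant $q$ is irrelevant for orthogonality.

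Next, I identify $\rho(u) = k + 2\sqrt{q}\,u$ as the weight to feed into Lemma \ref{lemma:general_orthog_poly}. Taking the degree-two ansatz $h(z) = a + bz + cz^2$ with $c = 0$ and $h(z) = \sqrt{q} + z$, one checks directly that
\[
|h(e^{i\theta})|^2 \;=\; q + 1 + 2\sqrt{q}\cos\theta \;=\; k + 2\sqrt{q}\cos\theta \;=\; \rho(\cos\theta),
\]
so the lemma applies with $a = \sqrt{q}$, $b = 1$, $c = 0$. The resulting orthogonal family with respect to the Bernstein--Szeg\H{o} measure in the variable $u$ is $\widetilde{P}_0(u) = 1$ and $\widetilde{P}_t(u) = \sqrt{q}\,U_t(u) + U_{t-1}(u)$ for $t \geq 1$. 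Substituting back $u = k(1-\lambda)/(2\sqrt{q})$ and rescaling the degree-zero element by the harmless constant $\sqrt{q}$ (orthogonality of $P_0$ with higher-degree $P_t$ is insensitive to this scaling) yields exactly the polynomials $P_t(\lambda)$ displayed in \eqref{eq:orthogonal_poly_reg_graph_cheb_form}.

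\textbf{Main obstacle.} The only substantive step is the cancellation in the pushforward: recognizing that multiplying $d\mu$ by the prefactor $\lambda$ turns the symmetric rational weight with denominator $1 - (1-\lambda)^2$ into an asymmetric one with a single linear denominator $k + 2\sqrt{q}\,u$. This asymmetry is precisely what allows the hypothesis $\rho(\cos\theta) = |h(e^{i\theta})|^2$ to be matched by a degree-one $h$, which in turn is responsible for the clean two-term structure $\sqrt{q}\,U_t + U_{t-1}$ of the answer; without the extra factor $\lambda$ the weight would force $h$ to be genuinely degree two and produce a three-term Chebyshev combination. Everything else is routine algebra and a direct invocation of the cited lemma.
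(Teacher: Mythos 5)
Your proof is correct and follows essentially the same route as the paper's: the same affine change of variables $u = k(1-\lambda)/(2\sqrt{q})$, the same cancellation of the factor $\lambda$ against one linear factor of $1-(1-\lambda)^2$ to produce the Bernstein--Szeg\H{o} weight with $\rho(u) = k + 2\sqrt{q}\,u$ and $h(z) = \sqrt{q}+z$, and the same invocation of Lemma \ref{lemma:general_orthog_poly}. The only (harmless) difference is that you perform the substitution in one step and verify $|h(e^{i\theta})|^2 = \rho(\cos\theta)$ explicitly, where the paper splits the substitution into a symmetrization followed by a rescaling.
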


\begin{proof}
From \eqref{eq:gossip_distr_reg_graph}, we know that the measure $\mu$ has the following density:
\begin{equation} 
    d\mu(\lambda) = \frac{k}{2 \pi} \frac{\sqrt{\frac{4q}{k^2} - (1 - \lambda)^2}}{1 - (1 - \lambda)^2} \,d \lambda,
\end{equation}
This density function is symmetric at 1, let's transform it into a form that is symmetric at 0
\begin{equation}
    d\mu(1 - \lambda) = \frac{k}{2 \pi} \cdot \frac{\sqrt{\frac{4q}{k^2} - \lambda^2}}{1 - \lambda^2} \,d \lambda,
\end{equation}
Now, let's normalize the measure on the interval $\left[-1, 1\right]$
\begin{equation}
    d\mu\left(1 - \frac{2 \sqrt{q} }{k} \cdot \lambda\right) = \frac{k}{2 \pi} \frac{\sqrt{\frac{4q}{k^2} - \frac{4q}{k^2} \lambda^2}}{1 - \frac{4q}{k^2} \lambda^2} \,d \lambda = \frac{\sqrt{q}}{\pi} \cdot \frac{\sqrt{1 - \lambda^2}}{1-\frac{4q}{k^2} \lambda^2} d \lambda,
\end{equation}
Let $\tau(\lambda) = 1 - \frac{2 \sqrt{q}}{k} \cdot \lambda $, then

\begin{equation}
    \tau(\lambda) \,d \mu \left(\tau(\lambda)\right) = \frac{\sqrt{q}}{\pi} \cdot \frac{\sqrt{1 - \lambda^2}}{1+\frac{2\sqrt{q}}{k} \lambda} d \lambda,
\end{equation}
Up to a constant multiple, the measure $\tau(\lambda) \,d \mu \left(\tau(\lambda)\right)$ has the form $\frac{\sqrt{1 - \lambda^2}}{k + 2\sqrt{q} \lambda}$. Thus, $\rho(\lambda) = k + 2\sqrt{q} \lambda$. A straightforward calculation yields $h(z) = \sqrt{q} + z$. By Lemma \ref{lemma:general_orthog_poly} the polynomials
\begin{equation}
    A_t(\lambda) = \sqrt{q} U_t(\lambda) + U_{t-1}(\lambda), \text{ for } t \geq 1 \text{ and }A_0(\lambda) = \sqrt{q}
\end{equation}

form a family orthogonal with respect to $\tau(\lambda) \,d \mu \left(\tau(\lambda)\right)$, i.e.,
\begin{equation}
        \int_{-1}^{1} A_i(\lambda) \cdot A_j(\lambda) \cdot \tau(\lambda) \,d \mu \left(\tau(\lambda)\right) = 0, \text{ if } i \neq j.
\end{equation}
Note that here we take $A_0(\lambda) = \sqrt{q}$ instead of $A_0(\lambda) = 1$. This does not affect orthogonality, but such choice of $A_0(\lambda)$ is more convenient for further derivations.

Therefore,
\begin{equation}
        \int_{1 -\frac{2 \sqrt{q}}{k}}^{1 + \frac{2 \sqrt{q}}{k}} A_i \left(\frac{k(1 - \lambda)}{2 \sqrt{q}} \right) A_j \left(\frac{k(1 - \lambda)}{2 \sqrt{q}} \right) \lambda d \mu (\lambda) = 0, \text{ if } i \neq j.
\end{equation}
Let $P_t(\lambda) = A_t \left(\frac{k(1 - \lambda)}{2 \sqrt{q}} \right)$, then
\begin{equation}
    \int_{1-\frac{2 \sqrt{q}}{k}}^{1+\frac{2 \sqrt{q}}{k}} P_i(\lambda) P_j(\lambda) \lambda d\mu (\lambda) = 0, \text{ if } i \neq j.
\end{equation}
Therefore, the polynomials
\begin{equation}
    P_t(\lambda) = A_t \left(\frac{k(1 - \lambda)}{2 \sqrt{q}} \right) = \sqrt{q} \cdot U_t\left(\frac{k(1 - \lambda)}{2 \sqrt{q}} \right) + U_{t-1}\left(\frac{k(1 - \lambda)}{2 \sqrt{q}} \right), \text{ for } t \geq 1 \text{ and } P_0(\lambda) = \sqrt{q}
\end{equation}
form a family orthogonal with respect to $\lambda d \mu(\lambda))$.

\end{proof}

\begin{theorem} \label{th:recur_orthogonal_poly}
    The sequence of polynomials
\begin{equation} \label{eq:recur_orthogonal_poly_reg_graph}
    \begin{split}
        & P_0(\lambda) = \sqrt{q}, \\
        & P_1(\lambda) = k (1 - \lambda) + 1, \\
        & P_{t + 1}(\lambda) = \frac{k (1 - \lambda)}{\sqrt{q}} \cdot P_t(\lambda) - P_{t-1}(\lambda)
    \end{split}
\end{equation}
is orthogonal w.r.t. the weight function $\lambda \,d\mu(\lambda)$
\end{theorem}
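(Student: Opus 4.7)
The plan is to derive the recurrence directly from the explicit Chebyshev representation established in Theorem \ref{th:orthogonal_poly_reg_graph_cheb_form}, using the standard three-term recurrence $U_{t+1}(x) = 2x\, U_t(x) - U_{t-1}(x)$ for Chebyshev polynomials of the second kind.

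First I would introduce the shorthand $x := \frac{k(1-\lambda)}{2\sqrt{q}}$, so that $\frac{k(1-\lambda)}{\sqrt{q}} = 2x$, and rewrite the formula from Theorem \ref{th:orthogonal_poly_reg_graph_cheb_form} as $P_t(\lambda) = \sqrt{q}\,U_t(x) + U_{t-1}(x)$ for $t \geq 1$. Verifying the initial values is then a one-line computation: $P_0(\lambda) = \sqrt{q}$ by definition, and $P_1(\lambda) = \sqrt{q}\,U_1(x) + U_0(x) = 2\sqrt{q}\,x + 1 = k(1-\lambda) + 1$, matching the statement.

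The main step is to establish the recurrence for $t \geq 2$. Applying the Chebyshev recurrence to both $U_{t+1}$ and $U_t$ appearing in $P_{t+1}$ gives
\begin{equation*}
P_{t+1}(\lambda) = \sqrt{q}\bigl(2x U_t(x) - U_{t-1}(x)\bigr) + \bigl(2x U_{t-1}(x) - U_{t-2}(x)\bigr) = 2x\bigl(\sqrt{q}\,U_t(x) + U_{t-1}(x)\bigr) - \bigl(\sqrt{q}\,U_{t-1}(x) + U_{t-2}(x)\bigr),
\end{equation*}
which equals $2x P_t(\lambda) - P_{t-1}(\lambda) = \tfrac{k(1-\lambda)}{\sqrt{q}} P_t(\lambda) - P_{t-1}(\lambda)$, as desired.

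The only subtle point is the case $t = 1$, where the general formula $P_t = \sqrt{q}\,U_t + U_{t-1}$ does not directly apply to $P_0$ (since $P_0 = \sqrt{q}$ rather than $\sqrt{q}\,U_0 + U_{-1}$). I would handle this by directly verifying that $P_2(\lambda) = \tfrac{k(1-\lambda)}{\sqrt{q}} P_1(\lambda) - P_0(\lambda)$ holds by plugging in $P_0 = \sqrt{q}$, $P_1 = 2\sqrt{q}\,x + 1$, and $P_2 = \sqrt{q}(4x^2 - 1) + 2x$ and collecting terms; equivalently, one notes that adopting the convention $U_{-1} \equiv 0$ makes the general derivation above apply at $t=1$ as well. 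No step here is a serious obstacle — the work is purely a reduction to the Chebyshev recurrence, with mild bookkeeping at the boundary index.
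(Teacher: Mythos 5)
Your proposal is correct and follows essentially the same route as the paper: both start from the explicit Chebyshev representation of Theorem \ref{th:orthogonal_poly_reg_graph_cheb_form} and reduce the claimed recurrence to the three-term recurrence $U_{t+1}(x)=2xU_t(x)-U_{t-1}(x)$, checking the $t=1$ boundary case separately. Your grouping of terms is in fact slightly cleaner than the paper's algebra, and your handling of the $t=1$ case (or equivalently the convention $U_{-1}\equiv 0$) is exactly the "straightforward verification" the paper invokes.
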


\begin{proof}
    From theorem \ref{th:orthogonal_poly_reg_graph_cheb_form} we know that the sequence of polynomials 
\begin{equation*}
\begin{split}
    & P_0(\lambda) = \sqrt{q}, \\
    & P_t(\lambda)  = \sqrt{q} \cdot U_t\left(\frac{k(1-\lambda)}{2   \sqrt{q}} \right) + U_{t-1}\left(\frac{k(1-\lambda)}{2   \sqrt{q}} \right), \; t \geq 1 
\end{split}
\end{equation*}
is orthogonal w.r.t. the weight function $\lambda \,d\mu(\lambda)$.
It is straightforward to verify that the recursion holds for $t=1$. Now, consider $t \geq 2$. Then, 
\begin{equation} \label{eq:opt_poly_via_chebyshev}
    P_{t+1}(\lambda)  = \sqrt{q} \cdot U_{t+1}\left(\frac{k(1-\lambda)}{2   \sqrt{q}} \right) + U_{t}\left(\frac{k(1-\lambda)}{2   \sqrt{q}} \right).
\end{equation}
From \eqref{eq:chebyshev_poly_second_kind}:
\begin{equation*}
        U_{t+1}(\lambda) = 2 \lambda U_t(\lambda) - U_{t-1}(\lambda) 
\end{equation*}
Let $\sigma(\lambda) = \frac{k(1-\lambda)}{2 \sqrt{q}}$. We have
\begin{equation*}
\begin{split}
    U_{t+1}\left(\sigma(\lambda) \right) &= 2 \sigma(\lambda) \cdot U_t\left(\sigma(\lambda)\right) - U_{t-1}\left(\sigma(\lambda) \right) \\
    \sqrt{q} \cdot U_{t+1}\left(\sigma(\lambda) \right) &= k(1-\lambda) \cdot U_t\left(\sigma(\lambda) \right) - \sqrt{q} \cdot U_{t-1}\left(\sigma(\lambda) \right) \\
     &= \frac{k(1-\lambda)}{\sqrt{q}} \cdot \left( \sqrt{q} \cdot U_t\left(\sigma(\lambda) \right) + U_{t-1}\left(\sigma(\lambda) \right) \right) - \frac{k(1-\lambda)}{\sqrt{q}} \cdot U_{t-1}\left(\sigma(\lambda) \right) - \sqrt{q} \cdot U_{t-1}\left(\sigma(\lambda) \right) \\
     &=\frac{k(1-\lambda)}{\sqrt{q}} \cdot P_t(\lambda) - \frac{k(1-\lambda)}{\sqrt{q}} \cdot U_{t-1}\left(\sigma(\lambda)\right)   - \sqrt{q} \cdot U_{t-1}\left(\sigma(\lambda)\right)
\end{split} 
\end{equation*}
Therefore, applying to \eqref{eq:opt_poly_via_chebyshev} we get
\begin{equation*}
\begin{split}
     P_{t+1}(\lambda) &= \frac{k(1-\lambda)}{\sqrt{q}} \cdot P_t(\lambda) - \frac{k(1-\lambda)}{\sqrt{q}} \cdot U_{t-1}\left(\sigma(\lambda) \right) - \sqrt{q} \cdot U_{t-1}\left(\sigma(\lambda) \right) + U_{t}\left(\sigma(\lambda) \right) \\
     &= \frac{k(1-\lambda)}{\sqrt{q}} \cdot P_t(\lambda) - \sqrt{q} \cdot U_{t-1}\left(\sigma(\lambda) \right) - U_{t-2}\left(\sigma(\lambda) \right) \\ 
     &= \frac{k(1-\lambda)}{\sqrt{q}} \cdot P_t(\lambda) - P_{t-1}(\lambda)
\end{split} 
\end{equation*}
\end{proof}

\ResOrthoPolyRG*

\begin{proof}
Let $P_t(\lambda)$ be the sequence of polynomials defined by
\begin{equation} \label{eq:rec_poly}
    \begin{split}
        & P_0(\lambda) = \sqrt{q}, \\
        & P_1(\lambda) = k (1 - \lambda) + 1, \\
        & P_{t + 1}(\lambda) = \frac{k (1 - \lambda)}{\sqrt{q}} P_t(\lambda) - P_{t-1}(\lambda).
    \end{split}
\end{equation}
By Theorem \ref{th:recur_orthogonal_poly}, the sequence of polynomials $P_t$ is orthogonal with respect to the weight function $\lambda \, d\mu(\lambda)$. \\
Consider $Q_t(\lambda) = \frac{P_t(\lambda)}{P_t(0)}$. It follows that $Q_t(0) = 1$ for all $t$. Moreover, the sequence $Q_t$ is also orthogonal with respect to $\lambda \, d\mu(\lambda)$. \\
From the recurrence relation, we have:
\begin{align*}
    P_{t + 1}(0) \cdot Q_{t + 1}(\lambda) &= \frac{k (1 - \lambda)}{\sqrt{q}} \cdot P_t(0) \cdot Q_t(\lambda) - P_{t - 1}(0) \cdot Q_{t-1}(\lambda), \\
    Q_{t+1} (\lambda) &= \frac{k (1 - \lambda)}{\sqrt{q}} \cdot \frac{P_t(0)}{P_{t+1}(0)} \cdot Q_t(\lambda) - \frac{P_{t-1}(0)}{P_{t+1}(0)} \cdot Q_{t-1}(\lambda), \\
    Q_{t+1} (\lambda) &= \frac{k (1 - \lambda)}{\sqrt{q}} \cdot \frac{P_t(0)}{P_{t+1}(0)} \cdot Q_t(\lambda) + \left(1 - \frac{k }{\sqrt{q}} \cdot \frac{P_t(0)}{P_{t+1}(0)} \right) \cdot Q_{t-1}(\lambda).
\end{align*}
Let $\delta_t = \frac{k}{\sqrt{q}} \cdot \frac{P_t(0)}{P_{t+1}(0)}$. Then,
\begin{equation} \label{eq:orthogonal_recur_eq}
    Q_{t + 1}(\lambda) = \delta_t (1 - \lambda) Q_t(\lambda) + (1 - \delta_t) Q_{t-1}(\lambda).
\end{equation}
Next, we determine $\delta_t$:
\begin{align*}
    - \frac{P_{t-1}(0)}{P_{t+1}(0)} &= 1 - \frac{k}{\sqrt{q}} \cdot \frac{P_t(0)}{P_{t+1}(0)} \text{ (substituting $\lambda = 0$ into \eqref{eq:rec_poly}}), \\
    - \frac{P_{t-1}(0)}{P_t(0)} \cdot \frac{P_t(0)}{P_{t+1}(0)} &= 1 - \frac{k }{\sqrt{q}} \cdot \frac{P_t(0)}{P_{t+1}(0)}, \\
    - \frac{q}{k^2} \cdot \delta_{t-1} \cdot \delta_{t} &= 1 - \delta_t, \\
     \delta_t \left(1 - \frac{q}{k^2} \cdot \delta_{t-1} \right) &= 1.
\end{align*}
Hence,
\begin{equation} \label{eq:delta_eq}
    \delta_t = \left(1 - \frac{q}{k^2} \cdot \delta_{t-1}\right)^{-1}, \quad \delta_0 = \frac{k}{\sqrt{q}} \cdot \frac{P_0 (0)}{P_1 (0)} = \frac{k}{\sqrt{q}} \cdot \frac{\sqrt{q}}{k + 1} = \frac{k}{k+1}.
\end{equation}
The theorem is established by utilizing \eqref{eq:orthogonal_recur_eq} and \eqref{eq:delta_eq}.
\end{proof}

\section{Convergence rates}

\subsection{Convergence rate of optimal method}

\ExpectedErrorOptMethod*

\begin{proof}
    
The residual polynomial for the optimal method is as follows:
\begin{align*}
    & Q_0(\lambda) = 1, \\
    & Q_1(\lambda) = 1 - \delta_0 \lambda, \\
    & Q_{t + 1}(\lambda) = \delta_t (1 - \lambda) Q_t(\lambda) + (1 - \delta_t) Q_{t-1}(\lambda),
\end{align*} 
where $\delta_0 = \frac{k}{k+1}$ and $\delta_t = \left(1 - \frac{k - 1}{k^2} \cdot \delta_{t-1}\right)^{-1}$,  $t \geq 1$. \\
From the proof of Theorem \ref{th:residual_orthogonal_poly_regular_graph}
\begin{equation*}
    Q_t(\lambda) = \frac{P_t(\lambda)}{P_t(0)} \text{ and } \delta_t = \frac{k}{\sqrt{q}} \cdot \frac{P_t(0)}{P_{t+1}(0)}, 
\end{equation*}
where $P_t(\lambda)$ is defined in Theorem \ref{th:orthogonal_poly_reg_graph_cheb_form}. Thus,
\begin{align*}
    Q_t(\lambda) &= \frac{\sqrt{q} U_t\left(\frac{k(1-\lambda)}{2 \sqrt{q}} \right) + U_{t-     1}\left(\frac{k(1-\lambda)}{2 \sqrt{q}} \right)}{P_t(0)} \\
                 &= \frac{1}{P_0(0)} \cdot \frac{P_0(0)}{P_1(0)} \cdots \frac{P_{t-1}(0)}{P_t(0)} \left( \sqrt{q} U_t\left(\frac{k(1-\lambda)}{2 \sqrt{q}} \right) + U_{t-1}\left(\frac{k(1-\lambda)}{2 \sqrt{q}} \right) \right) \\
                 &= \frac{\delta_0 \cdots \delta_{t-1}}{\sqrt{q}} \left( \frac{\sqrt{q}}{k} \right)^t  \left( \sqrt{q} U_t\left(\frac{k(1-\lambda)}{2 \sqrt{q}} \right) + U_{t-1}\left(\frac{k(1-\lambda)}{2 \sqrt{q}} \right) \right)
\end{align*}
Hence, 
\begin{equation*}
   Q_t(\lambda) = \frac{\delta_0 \cdots \delta_{t-1}}{\sqrt{q}} \left( \frac{\sqrt{q}}{k} \right)^t  \left( \sqrt{q} U_t\left( \sigma\left(\lambda\right)\right) + U_{t-1}\left( \sigma\left(\lambda\right) \right) \right),
\end{equation*} where 
\begin{equation*}
    \sigma(\lambda) = \frac{k(1 - \lambda)}{2 \sqrt{q}}.
\end{equation*}
Let
\begin{equation*}
    \widetilde{Q}_t(\lambda) = \frac{\delta_0 \cdots \delta_{t-1}}{\sqrt{q}} \left( \frac{\sqrt{q}}{k} \right)^t  \left( \sqrt{q} U_t(\lambda) + U_{t-1}(\lambda) \right),
\end{equation*}
Then,
\begin{equation*}
    Q_t(\lambda) = \widetilde{Q}_t\left(\sigma(\lambda)\right).
\end{equation*}
Applying Theorem \ref{th:expected_error}, we have
\begin{align*}
    \mathbb{E} \|x_t - x_*\|^2 &= R^2 \int_{\lambda_{\min}}^{\lambda_{\max}} Q_t^2(\lambda) d\mu(\lambda) \\
    &= R^2 \cdot \frac{k}{2\pi} \int_{\lambda_{\min}}^{\lambda_{\max}} Q_t^2(\lambda) \frac{\sqrt{4 \cdot \frac{k-1}{k^2} -(1-\lambda)^2}}{1 - (1-\lambda)^2} \,d\lambda\\
    &= R^2 \cdot \frac{k}{2\pi} \int_{\lambda_{\min}}^{\lambda_{\max}} \widetilde{Q}_t^2\left(\sigma(\lambda)\right) \frac{\sqrt{4 \cdot \frac{k-1}{k^2} -(1-\lambda)^2}}{1 - (1-\lambda)^2} \,d\lambda\\
\end{align*}
By substitution $u = \sigma(\lambda)$, we get:
\begin{align*}
    \mathbb{E} \|x_t - x_*\|^2 &= R^2 \cdot \frac{2q}{k \pi} \int_{-1}^{1} \widetilde{Q}_t^2(u) \cdot \frac{\sqrt{1 - u^2}}{1 - \frac{4q}{k^2} \cdot u^2} du, \quad \text{where } M = \frac{4q}{k^2} \\
    &= R^2 \cdot \frac{2 \cdot \delta_0^2 \cdots \delta_{t-1}^2}{k\pi} \left( \frac{\sqrt{q}}{k} \right)^{2t}  \int_{-1}^{1} \left( q U_t^2(u) + 2 \sqrt{q} U_t(u) U_{t-1}(u) + U_{t-1}^2(u) \right) \frac{\sqrt{1 - u^2}}{1 - M u^2} du \\
\end{align*}
Using the substitution $u = \cos{\theta}$ and the properties of Chebyshev polynomials, we can express the expected error of optimal method as follows
\begin{align}
    \mathbb{E} \|x_t - x_*\|^2 &= R^2 \cdot C_t \cdot I,
\end{align}
where
\begin{align*}
    C_t = \frac{2 \cdot \delta_0^2 \cdots \delta_{t-1}^2}{k\pi} \left( \frac{\sqrt{q}}{k} \right)^{2t},
\end{align*}
and
\begin{align*}
    I = \int_{0}^{\pi} \left( q  \frac{\sin^2\left((t+1)\theta\right)}{\sin^2{\theta}} + 2 \sqrt{q} \frac{\sin\left((t+1)\theta\right) \sin(t \theta)}{\sin^2{\theta}} + \frac{\sin^2 \left(t\theta\right)}{\sin^2{\theta}} \right) \frac{\sin^2{\theta}}{1 - M \cos^2{\theta}} d \theta
\end{align*}
Since $1 \leq \frac{1}{1 - M \cos^2(\theta)} \leq \frac{1}{1-M}$, we have:
\begin{align*}
    J \leq I \leq \frac{J}{1 - M}
\end{align*}
where
\begin{align*}
    J &= \int_{0}^{\pi} \left( q  \frac{\sin^2\left((t+1)\theta\right)}{\sin^2{\theta}} + 2 \sqrt{q} \frac{\sin\left((t+1)\theta\right) \sin(t \theta)}{\sin^2{\theta}} + \frac{\sin^2 \left(t\theta\right)}{\sin^2{\theta}} \right) \sin^2{\theta} \,d \theta \\
    &=\int_{0}^{\pi} \left( q  \sin^2\left((t+1)\theta\right) + 2 \sqrt{q} \sin\left((t+1)\theta\right) \sin(t \theta) + \sin^2 \left(t\theta\right) \right) d \theta.
\end{align*}
We know that:
\begin{align*}
    &\int_0^{\pi} \sin^2((t + 1) \theta) \,d\theta = \frac{\pi}{2} \\
    &\int_0^{\pi} \sin((t+ 1)\theta) \sin(t \theta) \,d\theta = 0, \\
    &\int_0^{\pi} \sin^2(t \theta) \,d\theta = \frac{\pi}{2}.
\end{align*}
Therefore, $J = \frac{k\pi}{2}$ and
\begin{align*}
    R^2 \cdot C_t \cdot \frac{k \pi}{2} &\leq \mathbb{E} \|x_t - x_* \|^2 \leq R^2 \cdot C_t \cdot \frac{k \pi}{2 (1 - M)} \\
    R^2 \cdot \prod_{i = 0}^{t-1} \left(\frac{\sqrt{q}}{k} \cdot \delta_i\right)^2 &\leq \mathbb{E} \|x_t - x_* \|^2  \leq R^2 \cdot \frac{1}{1 - M} \cdot \prod_{i = 0}^{t-1} \left(\frac{\sqrt{q}}{k} \cdot \delta_i\right)^2
\end{align*}
Replacing $M = \frac{4q}{k^2}$, we have:
\begin{equation} \label{eq:opt_err_using_delta}
    R^2 \cdot \prod_{i = 0}^{t-1} \left(\frac{\sqrt{q}}{k} \cdot \delta_i\right)^2 \leq \mathbb{E} \|x_t - x_* \|^2  \leq R^2 \cdot \left(1 + \frac{2}{k - 2} \right)^2 \cdot \prod_{i = 0}^{t-1} \left(\frac{\sqrt{q}}{k} \cdot \delta_i\right)^2
\end{equation}
\begin{lemma}
    For the sequence $\delta_t$ defined in Theorem \ref{th:residual_orthogonal_poly_regular_graph}, the following equality holds:
    \begin{equation}
        \prod_{i = 0}^{t-1} \left(\frac{\sqrt{k - 1}}{k} \cdot \delta_i\right)^2 = \left(\frac{1}{k - 1}\right)^t \cdot \left( \frac{1}{1 + \frac{2}{k - 2} \left(1 - \frac{1}{(k - 1)^t}\right)}\right)^2
    \end{equation}
\end{lemma}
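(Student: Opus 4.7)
My plan is to reduce the product to a single quantity by telescoping, compute a closed form for that quantity via a linear recurrence, and then verify the claimed identity by straightforward algebra. Throughout, write $q = k-1$.

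First, recall from the proof of Theorem \ref{th:residual_orthogonal_poly_regular_graph} that $\delta_t = \frac{k}{\sqrt{q}} \cdot \frac{P_t(0)}{P_{t+1}(0)}$, so $\frac{\sqrt{q}}{k}\delta_t = \frac{P_t(0)}{P_{t+1}(0)}$ and the product telescopes:
$$\prod_{i=0}^{t-1} \frac{\sqrt{q}}{k}\,\delta_i \;=\; \frac{P_0(0)}{P_t(0)} \;=\; \frac{\sqrt{q}}{P_t(0)}.$$
Thus it suffices to obtain a closed form for $P_t(0)$.

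Next, I would evaluate the three-term recurrence from Theorem \ref{th:recur_orthogonal_poly} at $\lambda=0$ to get the scalar recurrence $P_{t+1}(0) = \frac{k}{\sqrt{q}} P_t(0) - P_{t-1}(0)$, with $P_0(0)=\sqrt{q}$ and $P_1(0)=k+1$. Its characteristic polynomial $r^2 - \frac{k}{\sqrt{q}} r + 1 = 0$ has discriminant $\frac{k^2 - 4q}{q} = \frac{(k-2)^2}{q}$, yielding the two roots $\sqrt{q}$ and $1/\sqrt{q}$. Solving the resulting two-by-two system for the coefficients (using the initial data above) gives
$$P_t(0) \;=\; \frac{\sqrt{q}}{k-2}\left(k\,q^{t/2} - 2\,q^{-t/2}\right) \;=\; \frac{k q^{t} - 2}{(k-2)\,q^{(t-1)/2}}.$$

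Finally, I would square $\sqrt{q}/P_t(0)$ and match it to the right-hand side. A direct computation gives
$$\left(\frac{\sqrt{q}}{P_t(0)}\right)^{\!2} \;=\; \frac{(k-2)^2\,q^{t}}{(k q^{t} - 2)^2} \;=\; \left(\frac{1}{k-1}\right)^{\!t}\cdot\left(\frac{(k-2)(k-1)^t}{k(k-1)^t - 2}\right)^{\!2}.$$
It remains to recognize that
$$1 + \frac{2}{k-2}\!\left(1 - \frac{1}{(k-1)^t}\right) \;=\; \frac{(k-2)(k-1)^t + 2\bigl((k-1)^t - 1\bigr)}{(k-2)(k-1)^t} \;=\; \frac{k(k-1)^t - 2}{(k-2)(k-1)^t},$$
so taking reciprocals identifies the second factor above with $\left(1 + \tfrac{2}{k-2}(1 - (k-1)^{-t})\right)^{-1}$, completing the proof.

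The substantive step is obtaining the closed form for $P_t(0)$; everything else is telescoping and algebraic rearrangement. I expect the main pitfall to be purely notational, namely bookkeeping the exponents of $\sqrt{q}$ when solving the recurrence and when converting between the forms $k q^{t/2} - 2 q^{-t/2}$ and $q^{-t/2}(k q^t - 2)$. The key algebraic identity that makes everything line up is $k^2 - 4(k-1) = (k-2)^2$, which is what produces clean roots of the characteristic polynomial and the $(k-2)$ factor visible in the target expression.
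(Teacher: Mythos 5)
Your proof is correct and rests on the same mechanism as the paper's: telescoping the product $\prod_{i}\frac{\sqrt{q}}{k}\delta_i$ into a single ratio and obtaining a closed form for the denominator from the second-order linear recurrence $P_{t+1}(0)=\frac{k}{\sqrt{q}}P_t(0)-P_{t-1}(0)$ (the paper's auxiliary sequence $Q_t(k)=\frac{k(k-1)^t-2}{k-2}$ is just $q^{(t-1)/2}P_t(0)$). The only differences are cosmetic: you reuse the identity $\delta_t=\frac{k}{\sqrt{q}}\cdot\frac{P_t(0)}{P_{t+1}(0)}$ from the proof of Theorem \ref{th:residual_orthogonal_poly_regular_graph} so the telescoping is immediate, whereas the paper re-derives the ratio structure by induction on $\omega_t=\frac{\sqrt{q}}{k}\delta_t$, and you solve the linear recurrence via the characteristic roots $\sqrt{q}$ and $1/\sqrt{q}$ rather than by telescoping first differences.
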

\begin{proof}
    Let $\omega_t = \frac{\sqrt{k-1}}{k} \cdot \delta_t$. Then,
    \begin{equation*}
        \omega_t = \left(\frac{k}{\sqrt{k - 1}} - \omega_{t-1}\right)^{-1}, \quad \omega_0 = \frac{\sqrt{k-1}}{k+1}.
    \end{equation*}
    We will prove by induction that $\omega_t = \frac{Q_t(k)}{Q_{t+1}(k)} \cdot \sqrt{k-1}$, where $Q_t(k)$ is a polynomial of degree $t$ with integer coefficients. \\
    For $t = 0$, we have
    \begin{align*}
        \omega_0 = \frac{1}{k+1} \sqrt{k-1}, \quad Q_0(k) = 1, \quad Q_1(k) = k + 1.
    \end{align*}
    Suppose that $\omega_{t-1} = \frac{Q_{t-1}(k)}{Q_t(k)} \cdot \sqrt{k-1}$, then consider $\omega_t$:
    \begin{align*}
        \omega_t = \left(\frac{k}{\sqrt{k-1}} - \omega_{t-1}\right)^{-1} = \frac{\sqrt{k-1}}{k - \omega_{t-1} \sqrt{k-1}} = \frac{\sqrt{k-1}}{k - \frac{Q_{t-1}(k)}{Q_t(k)} \cdot (k-1)}  = \frac{Q_t(k)}{k \cdot Q_t(k) - (k-1) \cdot Q_{t-1}(k)} \cdot \sqrt{k-1}.
    \end{align*}
    Hence,
    \begin{align*}
        Q_{t+1} = k \cdot Q_t(k) - (k-1) \cdot Q_{t-1}(k), \quad Q_0(k) = 1, \quad Q_1(k) = k + 1.
    \end{align*}
    Therefore,
    \begin{align*}
        Q_{t+1}(k) - Q_t(k) = (k-1) \cdot \left(Q_t(k) - Q_{t-1}(k) \right) = \cdots = (k-1)^t \cdot \left(Q_1(k) - Q_0(k)\right) = k \cdot (k-1)^t.
    \end{align*}
    \begin{align*}
        Q_{t+1}(k) - Q_0(k) = \sum_{i = 0}^t Q_{i+1}(k) - Q_i(k) = \sum_{i = 0}^t k \cdot (k-1)^i = k \cdot \frac{(k-1)^{t+1} - 1}{k - 2}.
    \end{align*}
    Hence,
    \begin{align*}
        Q_t(k) = 1 + k \cdot \frac{(k-1)^t - 1}{k - 2} = \frac{k \cdot (k-1)^t - 2}{k - 2}.
    \end{align*}
    Finally, we have:
    \begin{align*}
         \prod_{i = 0}^{t-1} \left(\frac{\sqrt{k - 1}}{k} \cdot \delta_i\right)^2 &= \prod_{t=0}^{t-1} \omega_i^2 = (k-1)^{t} \prod_{i=0}^{t-1} \cdot \frac{Q_i^2(k)}{Q_{i+1}^2(k)} = \frac{(k-1)^t}{Q_t^2(k)} = \frac{(k-1)^t}{\left(\frac{k \cdot (k-1)^t - 2}{k - 2}\right)^2} \\
         &= \frac{(k-1)^t}{\frac{(k-1)^{2t}}{(k-2)^2} \cdot \left(k-\frac{2}{(k-1)^t}\right)^2} = \left(\frac{1}{k-1}\right)^t \cdot \left(\frac{k-2}{k-\frac{2}{(k-1)^r}}\right)^2 \\
         &= \left(\frac{1}{k - 1}\right)^t \cdot \left( \frac{1}{1 + \frac{2}{k - 2} \left(1 - \frac{1}{(k - 1)^t}\right)}\right)^2.
    \end{align*}
\end{proof}
Apply this lemma and estimate in inequation \eqref{eq:opt_err_using_delta}, we can conclude:
\begin{equation*}
     R^2 \cdot \left(\frac{1}{k - 1}\right)^t \cdot \left( \frac{1}{1 + \frac{2}{k - 2} \left(1 - \frac{1}{(k - 1)^t}\right)}\right)^2 \leq \mathbb{E} \|x_t - x_*\|^2 \leq R^2 \cdot \left(1 + \frac{2}{k - 2} \right)^2 \cdot \left(\frac{1}{k - 1}\right)^t \cdot \left( \frac{1}{1 + \frac{2}{k - 2} \left(1 - \frac{1}{(k - 1)^t}\right)}\right)^2.
\end{equation*}
Hence,
\begin{equation*}
    \mathbb{E} \|x_t - x_*\|^2 = \Theta \left( \left(\frac{1}{k - 1}\right)^t \cdot \left( \frac{1}{1 + \frac{2}{k - 2} \left(1 - \frac{1}{(k - 1)^t}\right)}\right)^2 \right).
\end{equation*}
\end{proof}

\subsection{Convergence rate of Heavy Ball method}

\ExpectedErrorHB*

\begin{proof}
Consider the polynomial for Heavy Ball method:
\begin{equation*}
    P_t^{\text{PM}}(\lambda) = m^{t/2} \left( \alpha T_t(\sigma(\lambda)) + \beta U_t(\sigma(\lambda)) \right),
\end{equation*}
where
\begin{equation*}
    \quad \alpha = \frac{2m}{1 + m}, \quad \beta = \frac{1 - m}{1 + m}, \quad \sigma(\lambda) = \frac{1+m-h\lambda}{2\sqrt{m}}, \quad m = \left(\frac{\sqrt{\lambda_{\max}} - \sqrt{\lambda_{\min}}}{\sqrt{\lambda_{\max}} + \sqrt{\lambda_{\min}}}\right)^2, \quad h = \left(\frac{2}{\sqrt{\lambda_{\max}} + \sqrt{\lambda_{\min}}}\right)^2 .
\end{equation*}
For regular graph, we have:
\begin{equation*}
    \lambda_{\max} = 1 + \frac{2\sqrt{k-1}}{k}, \quad \lambda_{\min} = 1 - \frac{2\sqrt{k-1}}{k}.
\end{equation*}
Hence
\begin{equation*}
    \alpha = \frac{\left(\sqrt{\lambda_{\max}} - \sqrt{\lambda_{\min}}\right)^2}{\lambda_{\max} + \lambda_{\min}}, \quad \beta = \frac{2\sqrt{\lambda_{\max} \cdot \lambda_{\min}}}{\lambda_{\max} + \lambda_{\min}}, \quad \sigma(\lambda) = \frac{\lambda_{\max} + \lambda_{\min} - 2\lambda}{\lambda_{\max} - \lambda_{\min}}.
\end{equation*}
Let us define:
\begin{equation*}
    \widetilde{P}_t^{\text{PM}}(\lambda) = m^{t/2} \left( \alpha T_t(\lambda) + U_t(\lambda) \right).
\end{equation*}
Then,
\begin{equation*}
    P_t^{\text{PM}}(\lambda) = \widetilde{P}_t^{\text{PM}}\left(\sigma(\lambda)\right).
\end{equation*}
Applying Theorem \ref{th:expected_error}, we get:
\begin{align*}
    \mathbb{E} \|x_t - x_*\|^2 &= R^2 \int_{\lambda_{\min}}^{\lambda_{\max}} \left(P_t^{\text{PM}}(\lambda)\right)^2 d\mu(\lambda) \\
    &= R^2 \cdot \frac{k}{2\pi} \int_{\lambda_{\min}}^{\lambda_{\max}} \left(P_t^{\text{PM}}(\lambda)\right)^2 \frac{\sqrt{4 \cdot \frac{k-1}{k^2} -(1-\lambda)^2}}{1 - (1-\lambda)^2} \,d\lambda\\
    &= R^2 \cdot \frac{k}{2\pi} \int_{\lambda_{\min}}^{\lambda_{\max}} \left(\widetilde{P}_t^{\text{PM}}\left(\sigma\left(\lambda\right)\right)\right)^2 \frac{\sqrt{4 \cdot \frac{k-1}{k^2} -(1-\lambda)^2}}{1 - (1-\lambda)^2} \,d\lambda\\
\end{align*}
Using the substitution $u = \sigma(\lambda) = \frac{k(1-\lambda)}{2\sqrt{k-1}}$, we have:
\begin{align*}
    \mathbb{E} \|x_t - x_*\|^2 &= R^2 \cdot \frac{2q}{k \pi} \int_{-1}^{1} \left(\widetilde{P}_t^{\text{PM}}(u)\right)^2 \cdot \frac{\sqrt{1 - u^2}}{1 - \frac{4q}{k^2} \cdot u^2} du, \quad \text{where } M = \frac{4q}{k^2}, \,q = k - 1 \\
    &= R^2 \cdot \frac{2q}{k\pi}\cdot m^t \cdot \int_{-1}^{1} \left( \alpha^2 T_t^2(u) + 2 \alpha \beta T_t(u) U_t(u) + \beta^2 U_t^2(u) \right) \frac{\sqrt{1 - u^2}}{1 - M u^2} \,du.
\end{align*}
Using the substitution $u = \cos{\theta}$, we obtain:
\begin{align*}
    \mathbb{E} \|x_t - x_*\|^2 = R^2 \cdot \frac{2q}{k\pi}\cdot m^t \cdot \int_{0}^{\pi} \left( \alpha^2 \cos^2(t\theta) + 2 \alpha \beta \cos(t\theta) \cdot \frac{\sin\left((t+1)\theta\right)}{\sin{\theta}} + \beta^2 \frac{\sin^2 \left((t+1)\theta\right)}{\sin^2{\theta}} \right) \frac{\sin^2{\theta}}{1 - M \cos^2{\theta}} \,d\theta.
\end{align*}
We can estimate the expected error $\mathbb E \|x - x_*\|^2$ as follows:
\begin{equation} \label{eq:expected_error_polyak}
    R^2 \cdot \frac{2q}{k\pi} \cdot m^t \cdot I \leq \mathbb{E} \|x_t - x_*\|^2 \leq R^2 \cdot \frac{2q}{k\pi} \cdot \frac{ m^t \cdot I}{1 - M},
\end{equation}
where 
\begin{equation*}
    I = \int_{0}^{\pi} \left( \alpha^2 \cos^2(t\theta) + 2 \alpha \beta \cos(t\theta) \cdot \frac{\sin\left((t+1)\theta\right)}{\sin{\theta}} + \beta^2 \frac{\sin^2 \left((t+1)\theta\right)}{\sin^2{\theta}} \right) \sin^2(\theta) \,d\theta.
\end{equation*}
Now, we calculate the integral of each term in the sum:
\begin{align*}
    &\int_0^{\pi} \cos^2(t \theta) \sin^2(\theta) \,d\theta = \frac{\pi}{4}, \\
    &\int_0^{\pi} \cos(t \theta) \sin((t+ 1)\theta) \sin(\theta) \,d\theta = \frac{\pi}{4}, \\
    &\int_0^{\pi} \sin^2((t + 1) \theta) \,d\theta = \frac{\pi}{2}.
\end{align*}
Hence,
\begin{align*}
    I = \alpha^2 \cdot \frac{\pi}{4} + 2\alpha \beta \cdot \frac{\pi}{4} + \beta^2 \cdot \frac{\pi}{2} = \left(\alpha + \beta\right)^2 \cdot \frac{\pi}{4} + \beta^2 \cdot \frac{\pi}{4} = \left(\beta^2 + 1\right) \frac{\pi}{4} = \left(\left( \frac{k - 2}{k} \right)^2 + 1\right) \frac{\pi}{4}
\end{align*}
So,
\begin{align*}
    I = \frac{\pi \left(q^2 + 1\right)}{2 k^2}
\end{align*}
Substituting these results into \eqref{eq:expected_error_polyak}, we obtain an estimate of the expected error
\begin{equation} \label{eq:expected_error_polyak_est_final}
    R^2 \cdot \frac{q^3 +  q}{k^3} \cdot m^t \leq \mathbb{E} \|x_t - x_*\|^2 \leq R^2 \cdot \frac{q^3+q}{k(k-2)^2} \cdot m^t.
\end{equation}
Thus, we conclude that:
\begin{equation*}
    \mathbb{E} \|x_t - x_*\|^2 = \Theta \left(m^t\right) = \Theta \left( \left( \frac{1}{k - 1} \right)^t\right).
\end{equation*}
\end{proof}

\section{Supplementary algorithms} \label{app:sup_algo}
\subsection{Chebyshev Iterative Method}

The Chebyshev iterative method is widely recognized as the optimal optimization method for quadratic problems in worst-case analysis (\cite{pedregosa2021residualpolyopt, d2021acceleration}). The algorithm is defined as follows:

\begin{algorithm}[H]
\caption{Chebyshev Iterative Method}\label{alg:chebyshev_method}
\begin{algorithmic}
\State \textbf{Input:} starting guess $x_0$, $\rho = \frac{\lambda_{\max} - \lambda_{\min}}{\lambda_{\max} + \lambda_{\min}}$, $\omega_0 = 2$
\State \textbf{Initialize:} $x_1 = x_0 - \frac{2}{\lambda_{\max} + \lambda_{\min}} \nabla f(x_0)$
\For {$t = 1, 2, \dots$}
\State $\omega_t = \left( 1 - \frac{\rho^2}{4} \omega_{t - 1} \right)^{-1}$
\State $x_{t+1} = x_t + (\omega_t - 1) (x_t - x_{t-1}) - \omega_t \frac{2}{\lambda_{\max} + \lambda_{\min}} \nabla f(x_t)$
\EndFor
\end{algorithmic}    
\end{algorithm}

The Chebyshev Iterative Method can be derived from the shifted and normalized Chebyshev polynomials of the first kind \eqref{eq:chebyshev_poly_first_kind}. The following theorem provides the expression for the residual polynomial of the Chebyshev method:

\begin{theorem}[\cite{pedregosa2021residualpolyopt}] \label{th:chebyshev_method_poly}
    The residual polynomial associated with Chebyshev Interactive Method is the following shifted Chebyshev 
    \begin{equation}
        P_t^{\text{Cheb}}(\lambda) = \frac{T_t(\sigma(\lambda))}{T_t(\sigma(0))} \, ,
    \end{equation} where $\sigma(\lambda) = \frac{\lambda_{\max} + \lambda_{\min}}{\lambda_{\max} - \lambda_{\min}} - \frac{2}{\lambda_{\max} + \lambda_{\min}} \lambda$.
\end{theorem}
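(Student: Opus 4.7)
My plan is to verify directly that the polynomial $P_t^{\text{Cheb}}(\lambda) = T_t(\sigma(\lambda))/T_t(\sigma(0))$ is indeed the residual polynomial attached to the Chebyshev iteration, by checking three items: $P_t^{\text{Cheb}}(0) = 1$, the polynomial satisfies the three-term recurrence induced by the algorithm, and the coefficients of that recurrence agree with the $\omega_t$ prescribed in Algorithm~\ref{alg:chebyshev_method}. Normalization at zero is immediate from the definition, and the degree is visibly $t$ because $\sigma$ is affine.

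Next I would derive the polynomial recurrence. Writing $\xi := \sigma(0) = (\lambda_{\max}+\lambda_{\min})/(\lambda_{\max}-\lambda_{\min}) = 1/\rho$, I apply \eqref{eq:chebyshev_poly_first_kind} with $y = \sigma(\lambda)$, namely $T_{t+1}(\sigma(\lambda)) = 2\sigma(\lambda) T_t(\sigma(\lambda)) - T_{t-1}(\sigma(\lambda))$, and divide both sides by $T_{t+1}(\xi)$. Pulling out $\sigma(\lambda)/\sigma(0)$ and using the same Chebyshev identity at $y=\xi$ to rewrite $T_{t-1}(\xi)/T_{t+1}(\xi)$, one obtains
\begin{equation*}
P_{t+1}^{\text{Cheb}}(\lambda) = \omega_t\,\frac{\sigma(\lambda)}{\sigma(0)}\, P_t^{\text{Cheb}}(\lambda) + (1 - \omega_t)\, P_{t-1}^{\text{Cheb}}(\lambda),
\qquad \omega_t := \frac{2\xi\, T_t(\xi)}{T_{t+1}(\xi)}.
\end{equation*}
A short algebraic manipulation shows $\sigma(\lambda)/\sigma(0) = 1 - \tfrac{2\lambda}{\lambda_{\max}+\lambda_{\min}}$, which is the exact linear factor produced by one step of the algorithm's iteration.

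Then I would match the algorithm to this recurrence at the level of residuals. Using $\nabla f(x_t) = \mathbf{L}(x_t - x_*)$ as in the gradient-descent derivation, subtracting $x_*$ from the update gives
\begin{equation*}
x_{t+1}-x_* = \omega_t\!\left[I_{nd}-\tfrac{2}{\lambda_{\max}+\lambda_{\min}}\mathbf{L}\right]\!(x_t-x_*) + (1-\omega_t)(x_{t-1}-x_*),
\end{equation*}
which translates into exactly the polynomial recurrence above. It remains to show that the scalars $\omega_t$ defined spectrally match the scalars $\omega_t$ defined by the algorithm. For this I divide the Chebyshev identity $T_{t+1}(\xi) = 2\xi T_t(\xi) - T_{t-1}(\xi)$ by $2\xi T_t(\xi)$, obtaining $1/\omega_t = 1 - (4\xi^2)^{-1}\omega_{t-1} = 1 - \tfrac{\rho^2}{4}\omega_{t-1}$, which is precisely the algorithm's recursion; the base case $\omega_0 = 2\xi T_0(\xi)/T_1(\xi) = 2$ also matches.

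Finally I would verify the two initial polynomials: $P_0^{\text{Cheb}}(\lambda) = 1$ is consistent with $x_0 - x_* = I(x_0 - x_*)$, and $P_1^{\text{Cheb}}(\lambda) = \sigma(\lambda)/\sigma(0) = 1 - \tfrac{2\lambda}{\lambda_{\max}+\lambda_{\min}}$ is exactly the residual polynomial corresponding to the initialization $x_1 = x_0 - \tfrac{2}{\lambda_{\max}+\lambda_{\min}}\nabla f(x_0)$. By induction with the shared recurrence, the residual polynomial of the algorithm coincides with $T_t(\sigma(\lambda))/T_t(\sigma(0))$ for every $t$. The main obstacle is purely bookkeeping: carrying the three ratios $T_{t-1}(\xi)/T_{t+1}(\xi)$, $T_t(\xi)/T_{t+1}(\xi)$, and $\sigma(\lambda)/\sigma(0)$ through the same recurrence so that all constants line up with the algorithm's $\omega_t$, $\rho$, and $2/(\lambda_{\max}+\lambda_{\min})$; no conceptual difficulty is expected.
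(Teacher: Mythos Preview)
The paper does not supply its own proof of this theorem; it is quoted from the cited reference, so there is no in-paper argument to compare against. Your proposal is the standard direct verification---check $P_t^{\text{Cheb}}(0)=1$, derive the three-term recurrence for $T_t(\sigma(\lambda))/T_t(\sigma(0))$ from the Chebyshev recurrence, translate the algorithm's update into the matching recurrence on residuals, identify the spectral $\omega_t = 2\xi T_t(\xi)/T_{t+1}(\xi)$ with the algorithm's $\omega_t$ via $1/\omega_t = 1 - (\rho^2/4)\,\omega_{t-1}$, and close the induction with the base cases $P_0$, $P_1$---and it is correct.

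One caution worth recording: the identity $\sigma(\lambda)/\sigma(0) = 1 - 2\lambda/(\lambda_{\max}+\lambda_{\min})$ you invoke holds for the standard affine map $\sigma(\lambda) = (\lambda_{\max}+\lambda_{\min}-2\lambda)/(\lambda_{\max}-\lambda_{\min})$, not for the $\sigma$ literally printed in the statement, whose coefficient of $\lambda$ is $2/(\lambda_{\max}+\lambda_{\min})$ rather than $2/(\lambda_{\max}-\lambda_{\min})$. The printed map does not send $[\lambda_{\min},\lambda_{\max}]$ onto $[-1,1]$ and gives a $P_1$ that disagrees with the algorithm's initialization, so it is almost certainly a typo; your argument implicitly uses the intended $\sigma$, which is what the algorithm requires. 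You should flag this explicitly rather than let the ``short algebraic manipulation'' absorb a silent correction.
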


\subsection{Nesterov Accelerated Gradient Descent}

Nesterov Accelerated Gradient Descent (AGD) is an optimization technique designed to enhance the convergence rate of gradient-based methods by incorporating a look-ahead step into traditional gradient descent. Unlike standard momentum methods, AGD computes the gradient at a predicted future position, effectively using a look-ahead mechanism. This approach, proposed by Yurii Nesterov, often results in better convergence rates by anticipating the trajectory of the gradient descent. The following algorithm outlines the steps of the Nesterov Accelerated Gradient Descent method:

\begin{algorithm}[H]
\caption{Nesterov Accelerated Gradient Descent}\label{alg:nesterov_method}
\begin{algorithmic}
\State \textbf{Input:} starting guess $x_0$.
\State \textbf{Initialize:} $y_0 = x_0$
\For {$t = 0, 1, \dots$}
    \State $x_{t+1} = y_t - \frac{1}{\lambda_{\max}} \nabla f(y_t)$,
    \State $y_{t+1} = x_{t+1} + \frac{\sqrt{\lambda_{\max}} - \sqrt{\lambda_{\min}}}{\sqrt{\lambda_{\max}} + \sqrt{\lambda_{\min}}} \cdot (x_{t+1} - x_t)$
\EndFor
\end{algorithmic}
\end{algorithm}

The effectiveness of AGD in solving quadratic problems can be analyzed through the residual polynomial associated with the method. The residual polynomial of AGD has a specific form that is influenced by the algorithm's parameters. The following theorem presents the expression for the residual polynomial of the Nesterov Accelerated Gradient Descent Method:

\begin{theorem}[\cite{paquette2023halting}] \label{th:nesterov_method_poly}
    The residual polynomial associated with Nesterov Accelerated Gradient Descent Method is the following 
    \begin{equation}
        P_t^{\text{AGD}}(\lambda) = \left(\beta(1 - \alpha\lambda)\right)^{\frac{t}{2}} \left[ \frac{2\beta}{1 + \beta} \cdot T_k(\sigma(\lambda)) + \left(1 - \frac{2\beta}{1+\beta} \cdot U_k(\sigma(\lambda))\right)\right],
    \end{equation} where $\sigma(\lambda) = \frac{(1+\beta)\sqrt{1 - \alpha\lambda}}{2\sqrt{\beta}}$, $\alpha = \frac{1}{\lambda_{\max}}$, $\beta = \frac{\sqrt{\lambda_{\max}} - \sqrt{\lambda_{\min}}}{\sqrt{\lambda_{\max}} + \sqrt{\lambda_{\min}}}$.
\end{theorem}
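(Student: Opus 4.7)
The plan is to unwind the two-variable iteration of Algorithm \ref{alg:nesterov_method} into a three-term recurrence for the residual polynomials $P_t^{\text{AGD}}$, rescale that recurrence into the standard Chebyshev form, and then read off the coefficients from the initial conditions. Since $\mathbf{L} x_* = 0$, one has $\nabla f(y_t) = \mathbf{L}(y_t - x_*)$, so subtracting $x_*$ from both update rules yields $x_{t+1} - x_* = (I - \alpha \mathbf{L})(y_t - x_*)$ and $y_{t+1} - x_* = (1+\beta)(x_{t+1} - x_*) - \beta(x_t - x_*)$, with $y_0 = x_0$. Writing $x_t - x_* = P_t(\mathbf{L})(x_0 - x_*)$ and eliminating the auxiliary sequence $y_t$ produces the scalar three-term recurrence
\begin{equation*}
    P_{t+1}(\lambda) = (1+\beta)(1-\alpha\lambda)\,P_t(\lambda) - \beta(1-\alpha\lambda)\,P_{t-1}(\lambda), \qquad P_0(\lambda)=1,\ P_1(\lambda)=1-\alpha\lambda.
\end{equation*}

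Next I perform the change of variables $P_t(\lambda) = \bigl(\beta(1-\alpha\lambda)\bigr)^{t/2}\,\tilde Q_t(\lambda)$. Dividing the above identity by $\bigl(\beta(1-\alpha\lambda)\bigr)^{(t-1)/2}$ cancels the asymmetry between the coefficients of $P_t$ and $P_{t-1}$ and collapses the recurrence to the pure Chebyshev form
\begin{equation*}
    \tilde Q_{t+1}(\lambda) = 2\sigma(\lambda)\,\tilde Q_t(\lambda) - \tilde Q_{t-1}(\lambda), \qquad \sigma(\lambda) = \frac{(1+\beta)\sqrt{1-\alpha\lambda}}{2\sqrt{\beta}},
\end{equation*}
with $\tilde Q_0 = 1$ and $\tilde Q_1 = \sqrt{(1-\alpha\lambda)/\beta}$. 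Crucially, the resulting $\sigma$ agrees exactly with the $\sigma$ stated in Theorem \ref{th:nesterov_method_poly}.

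Since $T_t(\sigma)$ and $U_t(\sigma)$ span the solution space of the Chebyshev recurrence, I decompose $\tilde Q_t = a\,T_t(\sigma) + b\,U_t(\sigma)$ and pin down $a,b$ from the initial values via $a + b = 1$ and $(a + 2b)\sigma = \tilde Q_1$; the unique solution is $a = 2\beta/(1+\beta)$ and $b = (1-\beta)/(1+\beta) = 1 - 2\beta/(1+\beta)$, which, multiplied back by the geometric prefactor, reproduces the formula claimed in the theorem. The main obstacle is spotting the correct rescaling $\bigl(\beta(1-\alpha\lambda)\bigr)^{t/2}$: before this substitution the ratio of the two coefficients in the $P_t$ recurrence depends on $\lambda$, so it does not directly resemble Chebyshev's relation; once the geometric factor is extracted, the remaining work reduces to verifying the two initial conditions and solving a $2 \times 2$ linear system.
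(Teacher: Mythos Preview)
Your derivation is correct: the three-term recurrence $P_{t+1}(\lambda) = (1+\beta)(1-\alpha\lambda)P_t(\lambda) - \beta(1-\alpha\lambda)P_{t-1}(\lambda)$ with $P_0=1$, $P_1=1-\alpha\lambda$ is exactly what Algorithm~\ref{alg:nesterov_method} yields after eliminating $y_t$, and your rescaling by $(\beta(1-\alpha\lambda))^{t/2}$ legitimately reduces it to the Chebyshev recurrence in the variable $\sigma(\lambda)$. The $2\times 2$ system for the coefficients is solved correctly, giving $a=\tfrac{2\beta}{1+\beta}$ and $b=\tfrac{1-\beta}{1+\beta}$.

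The paper itself does not prove this result; Theorem~\ref{th:nesterov_method_poly} is merely quoted from \cite{paquette2023halting} without argument, so there is no in-paper proof to compare against. Your approach is the standard one and matches how such formulas are obtained in that reference. Incidentally, your computation also reveals two typos in the stated formula: the indices $T_k$, $U_k$ should be $T_t$, $U_t$, and the closing parenthesis in the second bracketed term is misplaced---the intended expression is $\bigl(1-\tfrac{2\beta}{1+\beta}\bigr)U_t(\sigma(\lambda))$, exactly what you derived.
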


\end{document}